\let\oldforall\forall
\let\forall\undefined
\DeclareMathOperator{\forall}{\oldforall}
\newtheorem{theorem}{Theorem}[section]
\newtheorem{lemma}[theorem]{Lemma}
\newtheorem{corollary}[theorem]{Corollary}
\newtheorem{prop}[theorem]{Proposition}
\theoremstyle{remark}
\newtheorem{remark}[theorem]{Remark}
\theoremstyle{definition}
\newtheorem{definition}[theorem]{Definition}
\newtheorem*{definition*}{Definition}
\newcommand{\quotes}[1]{``#1''}
\newcommand{\Z}{\mathbb{Z}}
\newcommand{\R}{\mathbb{R}}
\newcommand{\N}{\mathbb{N}}
\newcommand{\SL}{\mathrm{SL}}
\newcommand{\dist}{\mathrm{dist}}
\newcommand{\BA}{\mathbf{Bad}}
\newcommand{\rr}{\mathbf{r}}
\renewcommand{\aa}{\mathbf{a}}
\newcommand{\bb}{\mathbf{b}}
\newcommand{\bgamma}{\boldsymbol{\gamma}}
\newcommand{\xx}{\mathbf{x}}
\newcommand{\pp}{\mathbf{p}}
\newcommand{\yy}{\mathbf{y}}
\renewcommand{\epsilon}{\varepsilon}
\title{\textbf{Hausdorff dimension of differences of badly approximable sets}}
\date{\vspace{-1em}}
\author{Dorsa Hatefi, David Simmons}
\begin{document}
\pagenumbering{arabic}
\maketitle

\begin{abstract}
 The set of badly approximable numbers, $\BA$, is known to be winning for Schmidt's game and hence has full Hausdorff dimension. It is also known that the set of inhomogeneously badly approximable numbers has full dimension. We prove that the set difference also has full dimension using a variant of the Schmidt game, which we call the \textit{rapid game}, played on the space of unimodular grids. 

\end{abstract}
\section{Statement of the problem and main results}
From the theory of Diophantine approximation, a real number $\alpha$ is called \textit{badly approximable}, if there exists a constant $c$ (depending on $\alpha$) such that \[\abs{\alpha-\frac{p}{q}}>\frac{c}{q^2}\] for all $p \in \Z$ and $q \in \Z_{\neq 0}$.
Let us denote the set of all such numbers as $\BA$.  $\BA$ is an example of a set with Lebesgue measure zero and full Hausdorff dimension, equal to the dimension of the whole real line. If unfamiliar with the definitions, the reader is referred to the notes~\cite{aspectsofrecentwork}.
\par A natural generalisation of this set is the set of \textit{inhomogeneous} badly approximable numbers. Informally, one can imagine \quotes{shifting} all the rational points on the real line by some parameter $\gamma$ in the numerator and approximating the real number $\alpha$ with these shifted rationals. More precisely, \[\BA^{\gamma}=\{\alpha \in \R : \exists \ c(\alpha,\gamma) \text{ s.t. } \abs{\alpha-\frac{p+\gamma}{q}}>\frac{c}{q^2} \, \forall \ (p,q) \in \Z \times \Z_{\neq 0} \}.\]

This set also has Lebesgue measure zero and full Hausdorff dimension~\cite{threeproblems,Einsiedler2011}. The full dimension proof in~\cite{threeproblems} uses the known homogeneous interval construction and with a \quotes{+ ($\gamma - \gamma = 0$) technique} gets the inhomogeneous statement. This means it proves the stronger statement that $\BA^{\gamma} \cap \BA$ has full dimension. Our aim would be to look at the set $\BA^{\gamma} \setminus \BA$, motivated by attempting to study $\BA^{\gamma}$ independently of $\BA$.

In what follows, without loss of generality, we restrict $\alpha$ to the interval $[0,1)$.
Note that sets above can be interpreted in terms of unit circle rotations. $\BA$ can be viewed as the set of angles $\alpha$ for which as $q$ runs through the integers, the circle rotations $q \alpha$ stay away from $0$ (the starting point), while $\BA^{\gamma}$ would be the set for which the rotations quantitatively avoid some other irrational point on the circle.
More precisely, our set of interest $\BA^{\gamma} \setminus \BA$ can be interpreted to be the set of points $\alpha \in [0,1)$ such that their $q\alpha$ (modulo $1$) orbits avoid the shrinking target $(\gamma-\frac{c}{q},\gamma+\frac{c}{q})$ for some $c>0$, but for every $\epsilon>0$ visits the interval $(-\frac{\epsilon}{q},\frac{\epsilon}{q})$ around $0$ infinitely often. 
\par
A stronger notion than having full Hausdorff dimension is to be a \textit{winning} set, in the sense of the classical \textit{Schmidt's game} first introduced in~\cite{Schmidt1966}, which we recall in Section~\ref{sec:3}. $\BA$ is a winning set~\cite{Schmidt1966}, and in fact it has been shown that $\BA^{\gamma}$ is also winning~\cite{Einsiedler2011}. Winning sets have the important intersection property (see Section~\ref{sec:3}) implying $\BA^{\gamma} \cap \BA$ is also winning. Now considering our set of interest $\BA^{\gamma} \setminus \BA$, this set cannot be winning in the sense of the classical Schmidt game since it is disjoint from another winning set $\BA^{\gamma}$ (see Section~\ref{sec:3}). Therefore, instead, we introduce a variant of the Schmidt game that does not have the intersection property but for which \quotes{winning} still implies full dimension.  A consequence of the new game is the following statement.

\begin{theorem} \label{maintheorem}
For all $\gamma \in \R \setminus \Z$, the set $\BA^{\gamma} \setminus \BA$ has full Hausdorff dimension.
\end{theorem}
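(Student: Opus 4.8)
The plan is to reformulate the statement dynamically and then exhibit a winning strategy for the \emph{rapid game} of Section~\ref{sec:3}.

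\emph{Dynamical reformulation.} Write $g_t=\mathrm{diag}(e^{t},e^{-t})$, $u_\alpha=\left(\begin{smallmatrix}1&\alpha\\0&1\end{smallmatrix}\right)$, let $X_0=\SL_2(\R)/\SL_2(\Z)$ be the space of unimodular lattices and $X=\SL_2(\R)\ltimes\R^2\,/\,\SL_2(\Z)\ltimes\Z^2$ the space of unimodular grids. To $(\alpha,\gamma)$ attach the grid $\Lambda_{\alpha,\gamma}=u_\alpha\Z^2+(\gamma,0)^{\!\top}$, whose linear part is $L_\alpha=u_\alpha\Z^2$. By the Dani correspondence, $\alpha\in\BA$ iff $\{g_tL_\alpha:t\ge0\}$ is bounded in $X_0$, and by its inhomogeneous analogue $\alpha\in\BA^{\gamma}$ iff $\{g_t\Lambda_{\alpha,\gamma}:t\ge0\}$ is bounded in $X$. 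Hence $\alpha\in\BA^{\gamma}\setminus\BA$ precisely when $t\mapsto g_t\Lambda_{\alpha,\gamma}$ stays in a fixed compact subset of $X$ forever, yet its linear part $g_tL_\alpha$ plunges arbitrarily deep into the cusp of $X_0$ along some $t_n\to\infty$. Restricting to the expanding horocycle arc $\alpha\mapsto\Lambda_{\alpha,\gamma}$ turns the rapid game into a game on nested subintervals of $[0,1)$, which is where Alice plays.

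\emph{Alice's strategy.} Fix in advance a sparse set of turns $n_1<n_2<\cdots$ with $n_{k+1}/n_k\to\infty$ and numbers $\epsilon_k\downarrow0$. On every turn outside $\{n_k\}$ Alice plays a \emph{standard} move, namely the sub-ball prescribed by the known winning strategy for $\BA^{\gamma}$; in dynamical terms this keeps $g_t\Lambda_{\alpha,\gamma}$ out of a fixed cusp neighbourhood of $X$, and it is available because at each scale only boundedly many shifted rationals $(p+\gamma)/q$ of the pertinent denominator must be dodged. On turn $n_k$ she plays a \emph{rapid} move: inside Bob's current ball $B$ she picks the rational $p_k/q_k$ of smallest denominator (so $q_k\lesssim|B|^{-1}$) and zooms into a ball $B'\subset B$ centred at $p_k/q_k$ so tiny that $q_k^{2}\,|\alpha-p_k/q_k|<\epsilon_k^{2}$ for every $\alpha\in B'$, sliding $B'$ slightly if necessary to keep it inside the region the $\BA^{\gamma}$-strategy regards as acceptable. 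By the Dani correspondence every such $\alpha$ forces $g_tL_\alpha$ to depth $\gtrsim\log(1/\epsilon_k)$ in the cusp of $X_0$ at some later time $t_k$; letting $k\to\infty$ gives $\alpha\notin\BA$.

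\emph{Compatibility, the crux.} One must check the two move types do not conflict, and this is exactly where the hypothesis $\gamma\in\R\setminus\Z$ is used. Since $q_k\alpha$ lies within $\epsilon_k^{2}/q_k$ of the integer $p_k$ while $\gamma$ is at distance $\|\gamma\|>0$ from $\Z$, for all large $k$ one has $|q_k\alpha-p-\gamma|\ge\tfrac12\|\gamma\|$ for every $p\in\Z$, so a rapid move never endangers the inhomogeneous inequality, provided the target constant is kept below $\tfrac12\|\gamma\|$. Geometrically: when $g_tL_\alpha$ is deep in the cusp its short vectors fill a single near-vertical line, and translating that line by $(\gamma,0)^{\!\top}$ with $\gamma\notin\Z$ pushes it a definite distance from $\0$, so $g_t\Lambda_{\alpha,\gamma}$ stays in a compact part of $X$ throughout the excursion. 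One further checks that a rapid move shrinks Bob's ball by only the bounded extra factor $\epsilon_k^{2}/q_k$ (once per rapid turn) and lands inside a ball the $\BA^{\gamma}$-strategy can continue from, so that strategy resumes on turn $n_k+1$; since the $n_k$ are sparse, the accumulated loss is negligible and the usual Schmidt-type dimension estimate for the rapid game still drives the Hausdorff dimension of the outcome set up to $1$ as Bob's parameter tends to $0$.

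\emph{Conclusion.} Thus $\BA^{\gamma}\setminus\BA$ is rapid-winning, and since rapid-winning implies full Hausdorff dimension (Section~\ref{sec:3}), Theorem~\ref{maintheorem} follows. The hard part is the compatibility step: one must synchronise the scale of each rapid move with the time $t_k$ of the induced cusp excursion of $g_tL_\alpha$ and verify, uniformly in $k$, that this excursion leaves the grid orbit $g_t\Lambda_{\alpha,\gamma}$ inside the fixed compact set — that is, that the obstruction to membership in $\BA$ is genuinely invisible to $\BA^{\gamma}$, which is precisely the content of taking $\gamma$ non-integral.
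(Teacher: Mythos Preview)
There are two genuine gaps.

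\textbf{The rapid game is misread.} In the paper's rapid game (Section~\ref{sec:3}), it is \textbf{Bob} who selects the shrinking parameter $\alpha_n\in(0,\alpha]$ at each stage, and \textbf{Alice} must return a ball of radius \emph{exactly} $\alpha_n\rho_n$. Alice never gets to ``zoom into a ball $B'$ so tiny that $q_k^{2}|\alpha-p_k/q_k|<\epsilon_k^{2}$''; she has no control over the contraction ratio. The mechanism is the opposite of what you describe: Bob is \emph{forced} to play arbitrarily small $\alpha_n$ infinitely often (else he loses by default), and Alice's auxiliary strategy is triggered precisely on those turns, exploiting the large $s_n=-\tfrac12\log\alpha_n$ that Bob has handed her. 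Your scheme of fixing turns $n_k$ in advance and shrinking by an extra $\epsilon_k^{2}/q_k$ there is simply not a legal play in the rapid game, and the dimension estimate of Section~\ref{sec:3} does not apply to a game in which Alice controls the contraction.

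\textbf{The compatibility step is incomplete.} Your argument that $\gamma\notin\Z$ protects the $\BA^{\gamma}$ condition only treats the single denominator $q_k$: from $q_k\alpha\approx p_k\in\Z$ you get $|q_k\alpha-p-\gamma|\ge\tfrac12\|\gamma\|$. But during the cusp excursion one must control $q|q\alpha-p-\gamma|$ for \emph{every} $(p,q)$ with $|q|$ up to the depth scale, and for $q\ne q_k$ the quantity governing this is $\|q_k\gamma\|$ rather than $\|\gamma\|$, which can be arbitrarily small. Equivalently, in your geometric picture the translation vector at time $t$ is $g_t(\gamma,0)=(e^{t}\gamma,0)$ reduced modulo the \emph{transformed} lattice $g_tL_\alpha$, so ``translating the short line by $(\gamma,0)$'' is not what is happening, and $\gamma\notin\Z$ alone does not force that reduced vector away from the line $\R\mathbf v$ of short lattice vectors. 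The paper handles this by working from a state already in $\pi^{-1}(K_\zeta)\cap F_{\zeta'}$, producing \emph{two} candidate basis vectors $\aa,\bb$ for the excursion, and then choosing whichever of $\Lambda+\R\aa$, $\Lambda+\R\bb$ the grid representative $\rr$ is farther from (Condition~(\Romannum{3})); the wedge estimate $|\aa'\wedge\xx'|\gtrsim 1$ then bounds the grid away from $\0$ throughout. Your proposal lacks this degree of freedom: always taking the smallest-denominator rational is the analogue of always taking $\aa$, and there is no reason that choice keeps the grid in $F_\delta$.
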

The proof of Theorem \ref{maintheorem}. consists of working on $\BA^{\gamma} \setminus \BA$ from a dynamical point of view, in particular, using flows on the space of unimodular grids. In Section~\ref{sec:2}, our dynamical language is described. In Section~\ref{sec:3}, we introduce our variant of Schmidt's game and its dynamical reformulation and strategy.

\begin{remark}
    It is possible to generalise this problem to higher dimensions. However, to extend the ideas in this paper, other techniques such as parametric geometry of numbers may be necessary, see e.g. \cite{DFSU}. It would also be interesting to study the reverse set difference, i.e.\ $\BA \setminus \BA^{\gamma}$, or more generally, to explore under what conditions on the pair $(\gamma , \delta)$, the set $\BA^{\gamma} \setminus \BA^{\delta}$ has full dimension. We leave these for a future work. 
\end{remark}

\textbf{Acknowledgements.} D.H. would like to thank Lifan Guan for useful discussions. D.H. was supported by a Departmental Studentship from the Department of Mathematics, University of York.
D.S. was supported by a Royal Society University Research Fellowship, URF\textbackslash R1\textbackslash 180649.

\section{Dynamical interpretation of Theorem \ref{maintheorem}}
\label{sec:2}

In this section, we will work with a geometric and dynamical interpretation of Diophantine approximations. The unfamiliar reader is referred to for example the lecture notes~\cite{kleinbock2010metric}.

Let us introduce a slightly different notation to define $\BA$ and $\BA^{\gamma}$ which will help our intuition in what follows.
For $\epsilon > 0 $, define
\[\BA_\epsilon=\left\{ \alpha \in \R : q |p-q\alpha | > \epsilon \   \forall \ (p,q) \in \Z \times \N  \right\}.\]
Therefore we have
\[\BA=\bigcup_{\epsilon>0}\BA_\epsilon.\]
Similarly, we can define the set of inhomogeneous badly approximable numbers as follows:
\[\BA^{\gamma}=\bigcup_{\epsilon>0}\BA^{\gamma}_{\epsilon},\]
where
\[\BA^{\gamma}_{\epsilon}=\left\{ \alpha \in \R : q | p-q\alpha+\gamma | > \epsilon, \forall \  (p,q) \in \Z \times \N \right\}.\]

Due to the Dani correspondence principle~\cite{Dani1985}, many Diophantine sets have a natural dynamical reformulation. We first briefly recall the homogeneous setting. Let $X_2$ be the space of unimodular lattices in $\R^2$. Then we have the isomorphism 
\[X_2 \cong \SL_2(\R)/\SL_2(\Z)\]
with the smooth left action of $\SL_2(\R)$ 
\[\SL_2(\R) \times \SL_2(\R)/\SL_2(\Z) \rightarrow \SL_2(\R)/\SL_2(\Z) \quad (h,g\SL_2(Z)) \mapsto hg\SL_2(\Z).\]
\[SL_2(\R) \times X_2 \rightarrow X_2 \quad (g,\Lambda) \mapsto g\Lambda .\]

Now for any $x \in \R$, let
\[u_x= \begin{pmatrix}
    1 & -x \\
    0 & 1
\end{pmatrix} \quad \text{ and } \quad \Lambda_x=u_x\Z^2=\{(p-qx,q): p,q\in \Z\} \in X_2 .\]Note that in this manner, we have attributed to each real number a unimodular lattice from the space $X_2$. Now for any $t \in \R$ let
\[g_t = \begin{pmatrix}
    e^t & 0\\
    0 & e^{-t}
\end{pmatrix}.\]
Then we have the following:
\begin{theorem}[Dani correspondence] For  $x \in \R$ we have 
\[x \in \BA \iff \text{The orbit } \{g_t\Lambda_x:t>0\} \text{ is bounded} .\]
\end{theorem}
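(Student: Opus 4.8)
The plan is to recast boundedness of the orbit as a uniform lower bound on the lengths of nonzero vectors in the lattices $g_t\Lambda_x$ via Mahler's compactness criterion, and then to match that lower bound, term by term, against the Diophantine inequality defining $\BA$ by choosing, for each integer vector, the time $t$ that balances the two coordinates.

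First I would record the explicit shape of the orbit. For $(p,q)\in\Z^2$,
\[
g_t u_x\begin{pmatrix}p\\q\end{pmatrix}=g_t\begin{pmatrix}p-qx\\q\end{pmatrix}=\begin{pmatrix}e^{t}(p-qx)\\ e^{-t}q\end{pmatrix},
\]
so the nonzero vectors of $g_t\Lambda_x$ are exactly these as $(p,q)$ runs over $\Z^2\setminus\{\0\}$. By Mahler's criterion, $\{g_t\Lambda_x:t>0\}$ is bounded (relatively compact) in $X_2$ if and only if there is $\delta>0$ with
\[
\max\{e^{t}|p-qx|,\ e^{-t}|q|\}\ \ge\ \delta\qquad\text{for all }t>0\text{ and all }(p,q)\in\Z^2\setminus\{\0\};
\]
I will work with the sup norm, which changes $\delta$ from its Euclidean value by at most a factor $\sqrt2$. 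I would also observe at the outset that both sides of the equivalence force $x$ to be irrational: a rational $x=a/b$ is not badly approximable, and for such $x$ the lattice $\Lambda_x$ contains $(0,b)$, whence $g_t(0,b)=(0,e^{-t}b)\to\0$ and the orbit is unbounded. So we may assume $x\notin\mathbb{Q}$, which guarantees $p-qx\neq0$ whenever $(p,q)\neq\0$.

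For the implication $\Leftarrow$, assume the displayed bound. Fix $(p,q)$; by replacing $(p,q)$ with $(-p,-q)$ we may take $q\ge 1$, and the case $q=0$ is covered by $e^{t}|p|\ge1$. If $|p-qx|\ge q$ then $q|p-qx|\ge q^2\ge1$ with nothing to prove; if $|p-qx|<q$, choose $t=\tfrac12\log\!\big(q/|p-qx|\big)>0$, so that $e^{t}|p-qx|=e^{-t}q=\sqrt{q|p-qx|}$, and the bound gives $q|p-qx|\ge\delta^2$. Hence $q|p-qx|\ge\min(\delta^2,1)$ for all $(p,q)\in\Z\times\N$, i.e.\ $x\in\BA_{\epsilon'}$ for any $\epsilon'<\min(\delta^2,1)$, so $x\in\BA$. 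For the implication $\Rightarrow$, suppose $x\in\BA_\epsilon$ with (after shrinking $\epsilon$) $\epsilon\le1$, so $q|p-qx|>\epsilon$ for all $(p,q)\in\Z\times\N$. Fix $t>0$ and $(p,q)\neq\0$. If $q=0$ then $e^{t}|p|\ge1\ge\sqrt\epsilon$; otherwise take $q\ge1$ after a sign change, and if $e^{-t}q\ge\sqrt\epsilon$ we are done, while if $e^{-t}q<\sqrt\epsilon$ then $e^{t}>q/\sqrt\epsilon$, so $e^{t}|p-qx|>e^{t}\cdot\epsilon/q>\sqrt\epsilon$. Thus the displayed bound holds with $\delta=\sqrt\epsilon$, and Mahler's criterion yields boundedness of the orbit.

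The argument is short once Mahler's criterion is in hand; the one step requiring genuine care — the "hard part" of an otherwise routine verification — is the balancing choice of $t$ in the direction $\Leftarrow$, which is what converts a statement quantified over all $t>0$ into a single Diophantine inequality. The rest is bookkeeping: reducing to $q\ge1$, disposing of the vector with $q=0$, and reducing to irrational $x$ so that $p-qx$ never vanishes.
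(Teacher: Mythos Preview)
The paper does not give its own proof of this theorem: it states the Dani correspondence as a known result, attributing it to Dani via the citation~\cite{Dani1985}, and immediately moves on to describe the compact sets $K_\epsilon$ via Mahler's criterion. So there is nothing in the paper to compare your argument against line by line.

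That said, your proof is correct and is exactly the standard proof one would expect for this statement. The two ingredients are Mahler's compactness criterion, to convert ``bounded orbit'' into a uniform lower bound on the shortest nonzero vector of $g_t\Lambda_x$, and the balancing substitution $t=\tfrac12\log(q/|p-qx|)$, which equalises the two coordinates and turns the quantifier over $t$ into the single inequality $q|p-qx|\ge\delta^2$. Your treatment of the edge cases (irrational $x$, the vector with $q=0$, the case $|p-qx|\ge q$) is fine. One cosmetic point: in the $\Leftarrow$ paragraph the sentence ``the case $q=0$ is covered by $e^t|p|\ge1$'' is out of place, since for $\BA$ you only quantify over $q\in\N$, so $q=0$ never arises on that side; the $q=0$ clause is only needed in the $\Rightarrow$ direction, where you do handle it correctly.
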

For $\epsilon > 0$, define the set $K_\epsilon$ as follows:
\[K_\epsilon = \{\Lambda \text{ unimodular lattice} : \Lambda \cap B(0, \epsilon) = \{0\}\}.\]
Then, by Mahler's compactness criterion~\cite[Theorem~11.33]{EinsiedlerWard}, $x \in \BA$ if and only if there exists $\epsilon>0$ such that the trajectory $g_t\Lambda_x$ stays within $K_\epsilon.$ In the following, we introduce the set $F_\epsilon$ as the corresponding set in the inhomogeneous case.\\
\subsection{Space of unimodular grids}
In the inhomogeneous setting, rather than unimodular lattices, the objects of interest are \textit{unimodular grids}. By a unimodular grid, we simply mean any translation of a unimodular lattice, hence a grid can be written as $\Lambda + \bgamma$ where we call $\Lambda \in X_2$ the \textit{homogeneous} part and $\bgamma \in \R^2$ the \textit{translation vector}. Two grids are identified in the following way:
\[\Lambda + \bgamma = \Lambda' + \bgamma' \iff \Lambda=\Lambda' \text{ and } \bgamma'-\bgamma \in \Lambda.\]

Therefore, the space of unimodular grids $Y_2$ has the following identification
\[Y_2 \cong \SL_2(\R) \ltimes \R^2 / SL_2(\Z) \ltimes \Z^2\] 
with the smooth action
\[SL_2(\R) \times Y_2 \rightarrow Y_2 \quad (g,(\Lambda+\bgamma)) \mapsto g\Lambda + g\bgamma .\]

\par Let $\widetilde X_2$ be the image of $X_2$ under the natural embedding of $X_2$ in $Y_2$ such that $\Lambda \mapsto \Lambda + \mathbf{0}$ and define the natural projection map $\pi:Y_2 \rightarrow X_2, \enspace \Lambda+\bgamma \mapsto \Lambda$ which recovers the homogeneous part. 
Similar to $K_\epsilon$, we define the set $F_\epsilon$ within the space of unimodular grids as follows:
\[F_{\epsilon} = \{\Lambda \text{ unimodular grid} : \Lambda \cap B(0, \epsilon) = \emptyset \}.\]
Note that instead of the intersection being $\{0\}$ (which is a point in every lattice) we require it to be empty. Therefore, $F_\epsilon$ is the set of grids (with any homogeneous part) that avoid an $\epsilon$-neighbourhood of the origin, informally, avoid becoming a lattice.
Now we are ready to state the inhomogeneous Dani correspondence~\cite{Einsiedler2011}.
Let $\Lambda_{x,\gamma}=\Lambda_x+\widetilde{\gamma}\in Y_{2}$, where $\widetilde{\gamma}=(\gamma, 0)$.

\begin{theorem}[Inhomogeneous Dani correspondence]
\label{theoremdani}
For  $x \in \R$ and $\gamma \in \R \setminus \Z$, we have
\[
x\in \BA^{\gamma} \iff  \text{The orbit }\{g_t \Lambda_{x, \gamma}: t>0\} \text{ remains in $F_\epsilon$  for some $\epsilon > 0$.}
\]
 \end{theorem}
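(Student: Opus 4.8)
The plan is to unwind the definitions on both sides and show the equivalence term by term, exactly mirroring the classical Dani correspondence but with the emptiness condition replacing the discreteness condition. First I would note that for a point $x$ and a shift $\gamma$, the grid $\Lambda_{x,\gamma} = u_x\Z^2 + \widetilde\gamma$ consists precisely of the vectors $(p - qx + \gamma, q)$ with $p,q \in \Z$; applying $g_t$ sends such a vector to $(e^t(p-qx+\gamma), e^{-t}q)$. The key computational step is then the standard optimization: for each nonzero $q$, the relevant trajectory comes close to the origin at the time $t_q$ roughly equal to $\tfrac12\log|q| - \tfrac12\log|p-qx+\gamma|$, where the two coordinates balance, and at that time the smallest vector in the orbit has size comparable to $\sqrt{|q|\,|p-qx+\gamma|}$. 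So the orbit $\{g_t\Lambda_{x,\gamma} : t > 0\}$ staying in $F_\epsilon$ for some $\epsilon > 0$ is equivalent, up to adjusting the constant, to a uniform lower bound $|q|\,|p - qx + \gamma| > \epsilon'$ over all $(p,q) \in \Z \times \N$ — which is exactly the statement $x \in \BA^\gamma_{\epsilon'}$, and hence $x \in \BA^\gamma$ after taking the union over $\epsilon'$.

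The argument splits into the two implications. For the forward direction, assuming $x \in \BA^\gamma$, pick $\epsilon$ witnessing $x \in \BA^\gamma_\epsilon$; I would show that any nonzero grid vector $g_t(p-qx+\gamma, q)$ has norm bounded below by a constant multiple of $\sqrt{\epsilon}$ uniformly in $t > 0$. Here one must be slightly careful about the range of $q$: for the bounded range of small $|q|$ (including $q \le 0$, which after the identification on grids still contributes vectors), the lower bound is automatic because $\gamma \notin \Z$ keeps these vectors away from the origin and the exponential factors $e^{\pm t}$ only help for $t$ large; one handles small $t$ separately using compactness of $[0,T]$ and the fact that $\Lambda_{x,\gamma}$ itself, being a grid with $\gamma \notin \Z$, lies in some $F_{\epsilon_0}$. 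For the reverse direction, assuming the orbit stays in $F_\epsilon$, I would plug in for each $q \in \N$ the time $t_q$ that balances the two coordinates of the vector $(p-qx+\gamma, q)$ with $p$ chosen so that $|p - qx + \gamma|$ is minimized; the $F_\epsilon$ condition at time $t_q$ then directly yields $\sqrt{|q|\,|p-qx+\gamma|} \gtrsim \epsilon$, giving $x \in \BA^\gamma_{c\epsilon^2}$ for a suitable constant.

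The main obstacle I anticipate is bookkeeping around the grid identification and the constants, rather than any deep difficulty. Specifically, on $Y_2$ a grid vector is only defined modulo the homogeneous lattice $\Lambda_x$, so "the vector $(p-qx+\gamma,q)$" must be read as ranging over a full coset structure, and one needs that minimizing over $p$ for fixed $q$ is the right reduction and that negative $q$ contribute nothing new beyond what $\gamma \notin \Z$ already guarantees. The role of the hypothesis $\gamma \in \R \setminus \Z$ is exactly to ensure $\Lambda_{x,\gamma}$ is genuinely a grid and not (a translate returning to) a lattice, so that $\Lambda_{x,\gamma} \cap B(0,\epsilon_0) = \emptyset$ for some $\epsilon_0 > 0$ to begin with; this is what makes the small-$t$ part of the orbit harmless. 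Once these points are organized, the equivalence follows by the same $g_t$-dilation bookkeeping as in the homogeneous Dani correspondence, with $K_\epsilon$ replaced by $F_\epsilon$ and "$\{0\}$" replaced by "$\emptyset$". I would present it as a short lemma on the geometry of $g_t$-orbits of a single grid vector, then assemble the two implications.
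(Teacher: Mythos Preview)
The paper does not prove this theorem; it states it as the inhomogeneous Dani correspondence and cites \cite{Einsiedler2011} for the argument, so there is no in-paper proof to compare against. Your sketch is the standard one for Dani-type correspondences---write the grid vectors as $(e^t(p-qx+\gamma),e^{-t}q)$, balance the two coordinates at the optimal time $t_q$, and translate the resulting norm bound into the Diophantine inequality $|q|\,|p-qx+\gamma|\gtrsim\epsilon^2$---and it is correct in outline.

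One point where you are slightly glib is the range $q\le 0$. The case $q=0$ is indeed handled by $\gamma\notin\Z$, but $q<0$ with $|q|$ large is not automatically ``small $|q|$'': the grid $\Lambda_{x,\gamma}$ is \emph{not} symmetric under $v\mapsto -v$, and the vector $(p-qx+\gamma,q)$ with $q<0$ becomes, after the substitution $(p',q')=(-p,-q)$, the quantity $|p'-q'x-\gamma|$, i.e.\ the $\BA^{-\gamma}$ inequality rather than $\BA^{\gamma}$. This is reconciled by observing that the paper's Section~1 definition of $\BA^\gamma$ already ranges over $q\in\Z_{\neq 0}$, which makes the condition symmetric in $\gamma\leftrightarrow -\gamma$; equivalently, one checks directly that the two one-sided conditions are equivalent. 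It is, as you anticipated, bookkeeping rather than a genuine obstacle, but it deserves an explicit sentence rather than being folded into ``small $|q|$''.
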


 With the above interpretation, Theorem \ref{maintheorem} can be rephrased as follows:
 \begin{theorem}
 \label{maintheoremv2}
    {The set of all x for which the orbit $g_t \Lambda_{x, \gamma}$ stays within $F_\epsilon$ for some $\epsilon>0$ while escaping $\pi^{-1}(K_\epsilon)$ for every $\epsilon>0$ has full Hausdorff dimension.}
 \end{theorem}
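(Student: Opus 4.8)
The plan is to derive Theorem~\ref{maintheoremv2} (hence Theorem~\ref{maintheorem}) from the existence of a winning strategy in a modified Schmidt game played on the space of unimodular grids $Y_2$, which we call the \emph{rapid game}. In this game the two players -- Alice, who is playing for the target set, and Bob -- alternately choose a nested sequence of balls exactly as in Schmidt's game, except that along a prescribed sparse sequence of rounds (the \emph{rapid rounds}) Alice is allowed a move at a scale far finer than the usual geometric rate; this extra freedom is precisely what will let her pin down an unboundedly long stretch of the future $g_t$-trajectory. The first step is the abstract transfer principle: \emph{if Alice has a winning strategy in the rapid game for a set $S$, then $S$ has full Hausdorff dimension}. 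This is proved in the spirit of the classical result for Schmidt-winning sets -- play Alice's strategy against a richly branching family of Bob moves and bound below the Hausdorff dimension of the resulting limit Cantor set -- the extra point being that the rapid rounds, although they relax the scaling at a sparse set of times, can be taken sparse enough that they do not harm the dimension estimate. Unlike Schmidt's game, the rapid game has no intersection property; this is exactly what is needed, since $\BA^{\gamma}\setminus\BA$ is disjoint from the winning set $\BA^{\gamma}$ and therefore cannot be winning for Schmidt's game.

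Next I would translate the target into dynamics. By the inhomogeneous Dani correspondence (Theorem~\ref{theoremdani}) together with Mahler's criterion, $x\in\BA^{\gamma}\setminus\BA$ if and only if the orbit $\{g_t\Lambda_{x,\gamma}:t>0\}$ stays in $F_\epsilon$ for some fixed $\epsilon>0$ while, for every $\epsilon>0$, it leaves $\pi^{-1}(K_\epsilon)$; that is, the homogeneous part $g_t\Lambda_x$ must, at arbitrarily large times, acquire a nonzero vector shorter than any prescribed threshold. Alice's task in the rapid game is thus twofold: (i) on ordinary rounds, keep the grid orbit inside a \emph{single} $F_\epsilon$, chosen once and for all; and (ii) at the $n$-th rapid round, force the orbit to make a deep cusp excursion in the $X_2$-direction, i.e.\ arrange that $g_t\Lambda_x$ develops a vector of length $\delta_n$ with $\delta_n\to 0$ as $n\to\infty$. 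For (i) one imports the known proof that $\BA^{\gamma}$ is Schmidt-winning \cite{Einsiedler2011}: the same strategy, which steers the grid away from the origin, is available to Alice on ordinary rounds. Component (ii) is the genuinely new ingredient and is where the rapid move is used.

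The mechanism I have in mind for (ii) is as follows. Recall that $g_t\Lambda_{x,\gamma}=g_t u_x\Z^2+(e^{t}\gamma,0)$, so the homogeneous part depends on $x$ while the translation part, reduced modulo the lattice, is what $F_\epsilon$ is sensitive to. At the rapid round Alice uses her fine move to pin down a long time-window of the orbit and steers the homogeneous lattice $g_t u_x\Z^2$ so that it has a short vector $v_n$ of length $\delta_n$; this is the sort of control that an ordinary Schmidt move at scale $\beta^n$ cannot achieve at a far-future time, which is exactly why a rapid move is needed. Crucially this is compatible with staying in a fixed $F_\epsilon$: the cosets of $\langle v_n\rangle$ inside the grid are lines parallel to $v_n$ whose perpendicular spacing is of order $\delta_n^{-1}$, enormous compared with $\epsilon$, so the only way the grid could enter $B(0,\epsilon)$ is if the one coset-line nearest the origin does; avoiding this amounts to a codimension-one constraint on the position of the (reduced) translation relative to $v_n$, and Alice retains enough freedom to respect it. Between consecutive rapid rounds she reverts to the $\BA^{\gamma}$-strategy of (i). Running the combined strategy produces an $x$ whose grid orbit stays in one $F_\epsilon$ for all $t>0$ and whose homogeneous orbit has arbitrarily deep cusp excursions, hence $x\in\BA^{\gamma}\setminus\BA$; by the transfer principle the set of such $x$ has full Hausdorff dimension.

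The step I expect to be the main obstacle is the quantitative reconciliation of (i) and (ii): one must exhibit a \emph{single} $\epsilon$, uniform over all rapid rounds $n$, such that the grid orbit never meets $B(0,\epsilon)$ -- not only at the deepest point of each excursion (where there is in fact ample room, since only the long, dual direction of the translation must be kept away from a short interval) but throughout the flow that carries the lattice into and back out of the cusp, during which $g_t$ can sweep a previously harmless coset-line toward the origin. Controlling this will require a careful choice of the direction of $v_n$ and of the timing of the rapid window, so that the short vector is in a controlled position at the relevant times and the return to ordinary play is clean, together with a check that the scale demanded by the rapid move is actually attainable within the nested-ball structure and consistent with Bob's preceding move. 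Two lower-order issues remain: choosing the rapid rounds sparse enough that the transfer principle loses no dimension, yet frequent enough to force excursions at every scale; and verifying that the abstract transfer principle genuinely tolerates rapid rounds. I expect these to be routine once the geometric core of (ii) is established.
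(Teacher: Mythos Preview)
Your outline is close to the paper's proof, and the geometric heart of (ii) --- sending a chosen lattice vector deep into the cusp during a long time-window while the grid stays in $F_\epsilon$ because the cosets of that short vector are widely spaced --- is exactly what the paper does; it is made precise via the wedge bound $\|g_s\aa'\|\cdot\|g_s\xx'\|\ge|\aa'\wedge\xx'|\gtrsim 1$, together with a choice between two basis vectors $\aa,\bb$ so that $\dist(\rr,\Lambda+\R\aa)\gtrsim 1$. Two points, however, differ from the paper in ways that matter.

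First, the game is set up the other way round. In the paper's rapid game it is \emph{Bob} who selects the contraction parameter $\alpha_n\in(0,\alpha]$ at every turn, and Alice wins by default unless $\inf_n\alpha_n=0$. Thus Bob is forced to hand Alice arbitrarily long time-windows, but he decides \emph{when}; for the dimension transfer Bob simply plays a fixed sequence (e.g.\ $\alpha/2,\alpha,\alpha/4,\alpha,\alpha,\alpha/8,\dots$) whose running geometric mean stays above $\alpha/2$, and the classical counting argument goes through unchanged. Your formulation, with rapid rounds prescribed in advance and Alice choosing how fine to go, can presumably be made to work, but you must then balance the depth of each excursion against the dimension loss yourself, whereas the paper's set-up decouples these two issues: the winning strategy only has to react to whatever $\alpha_n$ Bob offers.

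Second --- and this is a genuine gap --- your component (i) should not be the $\BA^{\gamma}$ strategy but the $\BA^{\gamma}\cap\BA$ strategy. Between excursions the paper's ``default strategy'' keeps the game state in $\pi^{-1}(K_\zeta)\cap F_{\zeta'}$ for \emph{universal} constants $\zeta,\zeta'$; this is precisely what makes the uniform $\epsilon$ you flag as the main obstacle attainable. If the homogeneous lattice is not pinned to a fixed compact set at the moment a rapid move begins, you cannot pick the short vector $\aa$ with $\|\aa\|\asymp 1$ and with $\dist(\rr,\Lambda+\R\aa)\gtrsim 1$, and both bounds feed directly into the constant in the wedge inequality. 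Relatedly, the paper's auxiliary move is built as a round trip: with $x=a_1/a_2+e^{-t}$ the vector $\aa$ reaches length $e^{-t/2}|a_2|$ at mid-window and returns to length $|a_2|$ at time $t$, so the default strategy can then restore the universal constants before the next excursion. Your sketch notes that ``the return to ordinary play is clean'' but does not engineer it; without both the homogeneous control between excursions and the explicit return, successive excursions start from ever worse positions and the single $\epsilon$ is lost.
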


\section{A variant of Schmidt's game}
\label{sec:3}
\subsection{Review of the classical game}
First we briefly remind ourselves of the classical Schmidt game played on a complete metric space, in our case simply $\R$. The game is between two players \textbf{Alice} and \textbf{Bob}, and depends on two fixed variables $0<\alpha,\beta<1$.
\textbf{Bob} starts the game by choosing a closed ball $B_0$ of radius $\rho_0$, followed by \textbf{Alice} who chooses a closed ball $A_0$ inside $B_0$ of radius $\alpha\rho_0$. In the next round, \textbf{Bob} chooses a closed ball $B_1$ inside $A_0$ of radius $\alpha \beta \rho_0$ followed by \textbf{Alice}.
In a similar manner, each player at their turn chooses a ball inside the other player's last ball with \textbf{Alice} multiplying the radius by $\alpha$, and \textbf{Bob} multiplying the radius by $\beta$. \\
This generates a nested sequence of closed balls with a unique point of intersection $x_\infty$:
\[\{x_\infty\}=\bigcap_{n=0}^\infty A_n= \bigcap_{n=0}^\infty B_n,\]
which we call the $\textit{outcome}$. A set $S$ is called \textit{$(\alpha,\beta)$-winning} if \textbf{Alice} can play in a way that $x_\infty$ lies in $S$ regardless of how \textbf{Bob} plays. The set $S$ is called \textit{$\alpha$-winning} if it is \textit{$(\alpha,\beta)$-winning} for all $\beta \in (0,1)$. Finally, we call a set \textit{winning}, if it is \textit{$\alpha$-winning} for some $\alpha$.
\par The following are some of the key consequences of winning proved in~\cite{Schmidt1966}:
\begin{itemize}
    \item winning sets have full Hausdorff dimension;
    \item the intersection of countably many winning sets is winning. 
\end{itemize}

It is proved that $\BA$ and $\BA^{\gamma}$ are winning and so it immediately follows that both are of  full dimension and moreover  that 
$$
\dim_{H}(\BA^{\gamma} \cap \BA)= 1  \, . 
$$ 
\begin{remark}\label{intersectionpropertyremark}
    Note that two disjoint sets cannot both be winning, since otherwise the intersection property would lead to a contradiction. Therefore it follows that $\BA^{\gamma} \setminus \BA$ is  not a winning set. 
\end{remark}

\subsection{The strong game}
We will also utilise the strong game introduced by McMullen \cite{McMullen_absolute_winning}. The rules of the strong game are the same as the rules of the classical Schmidt game, except that instead of \textbf{Alice} and \textbf{Bob} being required to choose balls with radii exactly $\alpha\rho_n$ and $\alpha\beta\rho_n$ respectively, \textbf{Alice} chooses a ball of radius $\rho_n' \geq \alpha\rho_n$ and \textbf{Bob} chooses a ball of radius $\rho_{n+1} \geq \beta \rho_n'$. The rules for determining the outcome of the game $x_\infty$ are the same; a set $S$ is called \emph{$(\alpha,\beta)$-strong winning} if \textbf{Alice} can play in a way such that $x_\infty$ lies in $S$ regardless of how \textbf{Bob} plays, and the notions of \emph{$\alpha$-strong winning} and \emph{strong winning} are defined analogously.

\begin{theorem}[\cite{McMullen_absolute_winning}]
Strong winning sets are winning.
\end{theorem}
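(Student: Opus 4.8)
It suffices to show that if $S$ is $\alpha$-strong winning then $S$ is $\alpha$-winning, since then ``strong winning'' ($=\alpha$-strong winning for some $\alpha$) implies ``winning'' ($=\alpha$-winning for some $\alpha$). So fix an arbitrary $\beta'\in(0,1)$ and look for a winning strategy for \textbf{Alice} in the classical $(\alpha,\beta')$-game. The idea is a simulation: \textbf{Alice} maintains, alongside the real game, a ``shadow'' strong game with parameters $(\alpha,\beta)$ for a suitably small $\beta$, and reads her real moves off a fixed strong-winning strategy $\sigma$ for that shadow game.

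The correct choice is $\beta\df\alpha\beta'\in(0,1)$; this is the one place where we use that strong winning supplies a winning strategy in the $(\alpha,\beta)$-strong game for \emph{every} $\beta$, not just one particular value. The shadow game is set up to share \textbf{Bob}'s balls with the real game: whenever \textbf{Bob} plays $B_n$ (of radius $\rho_n=\rho_0(\alpha\beta')^n$) in the real game, the very same ball $B_n$ is declared to be \textbf{Bob}'s move in the shadow game. Given the shadow history ending at $B_n$, the strategy $\sigma$ prescribes a ball $A_n^{\mathrm{str}}\subseteq B_n$ of radius $\rho_n'$ with $\alpha\rho_n\le\rho_n'\le\rho_n$; \textbf{Alice} plays $A_n^{\mathrm{str}}$ in the shadow game, but since in the real game she must use radius exactly $\alpha\rho_n$, she plays there a concentric sub-ball $A_n\subseteq A_n^{\mathrm{str}}$ of radius $\alpha\rho_n$, which fits because $\alpha\rho_n\le\rho_n'$.

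The one thing to verify is that the shadow game is then a \emph{legal} play of the strong $(\alpha,\beta)$-game in which \textbf{Alice} follows $\sigma$. When \textbf{Bob} answers in the real game with $B_{n+1}\subseteq A_n$ of radius $\alpha\beta'\rho_n$, we need $B_{n+1}$ to be a legal \textbf{Bob} move in the shadow game following $A_n^{\mathrm{str}}$: containment holds since $B_{n+1}\subseteq A_n\subseteq A_n^{\mathrm{str}}$, and the radius bound $\alpha\beta'\rho_n\ge\beta\rho_n'$ follows from $\rho_n'\le\rho_n$ together with $\beta=\alpha\beta'$. Hence the shadow game is a genuine strong $(\alpha,\beta)$-game in which \textbf{Alice} uses her winning strategy, so its outcome lies in $S$. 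But the two games share the same nested sequence of \textbf{Bob}-balls $B_0\supseteq B_1\supseteq\cdots$, with $\rho_n\to 0$, so they have the same outcome $x_\infty$. Therefore $x_\infty\in S$ no matter how \textbf{Bob} played, i.e.\ $S$ is $(\alpha,\beta')$-winning; as $\beta'$ was arbitrary, $S$ is $\alpha$-winning, hence winning.

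I do not anticipate a genuine obstacle; the argument is a short bookkeeping/simulation. The only subtlety — and the crux of the matter — is choosing the shadow parameter $\beta$ small enough relative to the classical $\beta'$ that \textbf{Bob}'s (smaller) classical moves still register as legal strong moves, which is exactly why the definition of strong winning quantifies over all $\beta$. One should also record the trivial fact that in $\R$ (or any normed space, the setting here) a closed ball of radius $r'$ contains a concentric closed ball of every radius $r\le r'$, used when \textbf{Alice} shrinks $A_n^{\mathrm{str}}$ to $A_n$.
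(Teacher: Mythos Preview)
The paper does not give its own proof of this statement; it is simply quoted with a citation to McMullen \cite{McMullen_absolute_winning}, so there is nothing to compare your argument against on the paper's side.

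Your simulation argument is correct and is essentially the standard proof. The key observation---that choosing the shadow parameter $\beta=\alpha\beta'$ makes every classical \textbf{Bob} move $B_{n+1}$ (radius $\alpha\beta'\rho_n$) automatically a legal strong-game \textbf{Bob} move after $A_n^{\mathrm{str}}$ (since $\alpha\beta'\rho_n\ge\alpha\beta'\rho_n'=\beta\rho_n'$)---is exactly right, and the containment $B_{n+1}\subseteq A_n\subseteq A_n^{\mathrm{str}}$ is immediate. One small remark: in the strong game as defined here the radii need not shrink to zero in general, so the ``outcome'' $x_\infty$ is not automatically a single point; but in your shadow game the \textbf{Bob} balls are literally the classical $B_n$ with $\rho_n=\rho_0(\alpha\beta')^n\to 0$, so this issue does not arise and the shared intersection is indeed a singleton lying in $S$.
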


\subsection{The rapid game}
We introduce the following variant of Schmidt's game. Let $0<\alpha,\beta<1$ as above and consider a new variable $\alpha_n\in (0,\alpha]$ which \textbf{Bob} has to choose at his $n$-th turn, such that rather than $\alpha$, \textbf{Alice} should multiply the radius by $\alpha_n$.
More precisely:
\begin{itemize}
    \item At the $0$-th stage, \textbf{Bob} chooses a ball $B_0$ of radius $\rho_0$, together with a parameter $\alpha_0 \in (0,\alpha]$.
    \item At the $n$-th stage, \textbf{Bob} chooses a ball $B_n=B(x_n,\rho_n)$ together with a parameter $\alpha_n \in (0,\alpha]$. Then \textbf{Alice} chooses a ball $A_n=B(y_n,\alpha_n \rho_n) \subseteq B_n$.
    \item At the $(n+1)$-th stage, \textbf{Bob} chooses a ball $B_{n+1}=B(x_{n+1},\rho_{n+1}) \subseteq A_n$ with $\rho_{n + 1} = \beta \alpha_n \rho_n$, and a parameter $\alpha_{n+1} \in (0,\alpha]$. 
\end{itemize}
If  $\inf_n \alpha_n >0$, we say \textbf{Alice} wins by default. Otherwise, we say \textbf{Alice} wins if and only if the unique intersection point $x_\infty$ of the sequence $B_n$ (i.e. the outcome) lies in the target set $S$. We call the set $S$ \textit{$(\alpha,\beta)$-rapid winning} if \textbf{Alice} has a strategy where she wins regardless of how \textbf{Bob} plays. A set is defined to be \emph{$\alpha$-rapid winning} or \emph{rapid winning} analogously to the concepts of $\alpha$-winning and winning. Note that the key difference between the rapid game and Schmidt's game is that \textbf{Bob} has to eventually choose very small values of $\alpha_n$, to avoid losing by default.\\
\begin{prop}
    Rapid-winning implies full Hausdorff dimension. 
\end{prop}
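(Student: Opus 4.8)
The plan is to carry out the Cantor-set construction that underlies Schmidt's ``winning $\Rightarrow$ full dimension'' theorem, modified to exploit the fact that in the rapid game \textbf{Bob} also controls the contraction ratios $\alpha_n$. Assume $S\subseteq\R$ is $\alpha$-rapid winning (the argument is the same in $\R^{d}$, with $1$ replaced by $d$). Fix a small parameter $\beta\in(0,1)$, to be sent to $0$ only at the very end; since $S$ is $\alpha$-rapid winning it is in particular $(\alpha,\beta)$-rapid winning, so fix a winning strategy $\sigma$ for \textbf{Alice} in the $(\alpha,\beta)$-rapid game. I will let \textbf{Bob} play against $\sigma$ in all possible ways at once, obtaining a rooted tree of balls each of whose infinite branches is a legitimate play.

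First I would fix, in advance, a sequence $\delta_{k}\downarrow0$ (say $\delta_k=2^{-k}$) and then choose ``special'' rounds $n_{1}<n_{2}<\cdots$ that are extremely sparse, and have \textbf{Bob} always declare $\alpha_{n}=\alpha$ except at round $n_{k}$, where he declares $\alpha_{n_{k}}=\delta_{k}$ (so the $\alpha_n$'s depend only on the round index). Because $\inf_{n}\alpha_{n}=0$ along every branch, \textbf{Alice} never wins by default, so by the choice of $\sigma$ the outcome of every branch lies in $S$. The tree itself is built in the usual way: the root is a ball $B_{0}$ of radius $\rho_{0}$; at a node $B_{n}=B(x_{n},\rho_{n})$, let $A_{n}=B(y_{n},\alpha_{n}\rho_{n})$ be \textbf{Alice}'s response prescribed by $\sigma$, and let the children of $B_{n}$ be a maximal collection of balls of the mandatory radius $\rho_{n+1}=\beta\alpha_{n}\rho_{n}$ that are contained in $A_{n}$ and whose centres are $3\rho_{n+1}$-separated. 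A packing count shows there are at least $M=M(\beta)$ such children, where $M\to\infty$ and $\log M/\log(1/\beta)\to1$ as $\beta\to0$; crucially the count depends only on the ratio $\alpha_{n}\rho_{n}/\rho_{n+1}=1/\beta$ and not on the (possibly tiny) value of $\alpha_{n}$. Keeping exactly $M$ children at each node, one checks the level-$n$ balls are pairwise disjoint, and $\mathcal C:=\bigcap_{n}\bigcup\{\text{level-}n\text{ balls}\}$ is a Cantor set with $\mathcal C\subseteq S$.

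It then remains to estimate $\dim_{H}\mathcal C$. From $\rho_{n}=\rho_{0}\beta^{n}\prod_{j<n}\alpha_{j}$ one gets
\[
-\log\rho_{n}=n\log\tfrac{1}{\alpha\beta}+\sum_{k\,:\,n_{k}<n}\log\tfrac{\alpha}{\delta_{k}}+O(1),
\]
and the middle sum is $o(n)$ provided the $n_{k}$ were chosen sparse enough (possible because the $\delta_{k}$, hence the partial sums $\sum_{j\le k}\log(1/\delta_{j})$, were fixed first). The level-$n$ collection consists of $M^{n}$ disjoint balls of radius $\rho_{n}$, the $M$ children inside any one parent being $\rho_{n+1}$-separated, so the standard mass-distribution lower bound for Cantor sets gives
\[
\dim_{H}S\;\ge\;\dim_{H}\mathcal C\;\ge\;\liminf_{n\to\infty}\frac{\log M^{\,n}}{-\log\!\big(M\rho_{n+1}\big)}\;=\;\frac{\log M}{\log\frac{1}{\alpha\beta}}.
\]
Since $\log M\sim\log\frac1\beta$ while $\log\frac1{\alpha\beta}=\log\frac1\beta+O(1)$, letting $\beta\to0$ makes the right-hand side tend to $1$; hence $\dim_{H}S=1$.

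I expect the only real difficulty to be managing the dual role of the parameters $\alpha_{n}$: forcing $\inf_{n}\alpha_{n}=0$ along every branch is what lets \textbf{Alice}'s winning strategy actually place the outcome in $S$ (rather than granting her a by-default win that tells us nothing), but small $\alpha_{n}$ accelerates the contraction $\rho_{n+1}=\beta\alpha_{n}\rho_{n}$ and so endangers the dimension bound. Confining the small values $\delta_{k}$ to a sufficiently sparse set of rounds --- sparse enough that their total contribution to $-\log\rho_{n}$ is $o(n)$, and hence invisible in the ratio defining $\dim_{H}\mathcal C$ --- resolves the tension; everything else is the classical Schmidt-game argument.
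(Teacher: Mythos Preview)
Your proof is correct and follows essentially the same approach as the paper: have \textbf{Bob} declare a sequence $(\alpha_n)$ with $\inf_n\alpha_n=0$ but whose geometric mean stays bounded below (the paper uses the explicit sequence $\tfrac{\alpha}{2},\alpha,\tfrac{\alpha}{4},\alpha,\alpha,\tfrac{\alpha}{8},\dots$, you use sparse special rounds achieving the equivalent condition $-\log\rho_n=n\log\tfrac{1}{\alpha\beta}+o(n)$), branch \textbf{Bob}'s play into $\sim 1/\beta$ children at each node, and obtain the lower bound $\log M/\log\tfrac{1}{\alpha\beta}\to 1$ as $\beta\to 0$. The only cosmetic difference is that the paper writes out the covering argument directly while you invoke the standard Cantor-set mass-distribution estimate.
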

\begin{proof} 
    Take the following choice of the sequence $\alpha_i$ for \textbf{Bob}:
    \begin{equation}  \label{seq} 
    \frac{\alpha}{2},\alpha,\frac{\alpha}{4},\alpha,\alpha,\frac{\alpha}{8},\alpha,\alpha,\alpha,\frac{\alpha}{16},\dots
    \end{equation}
    so that he will not lose by default. Note that this sequence satisfies
    \[\frac{\alpha}{2} \leq \sqrt[n]{\alpha_1 \dots \alpha_n}.\]
    Let $m=\lfloor \frac{1}{\beta} \rfloor$ be the number of pairwise disjoint intervals of length $\beta r$ fitting in an interval of length $r$. Fix a winning strategy for \textbf{Alice}, and define a map $\pi:\{1,\ldots,m\}^\N\to S$ as follows: $\pi(\omega)$ is the result of \textbf{Alice} playing her winning strategy, \textbf{Bob} choosing the sequence $(\alpha_i)$ according to \eqref{seq}, and on the $n$th turn, choosing $m$ disjoint intervals of size $\beta\alpha_n\rho_n$ in \textbf{Alice}'s previous move $A_n$, and choosing the $\omega_n$th of these intervals for his next play. Let $S^* = \pi(\{1,\ldots,m\}^\N) \subseteq S$, and note that $S^*$ is compact. 

Let $\{B_i\}$ a collection of intervals covering $S^*$ with
radii $\rho_i$. Since $S^*$ is compact we can without loss of generality assume $\{B_i\}$ is finite. Let
\[\alpha_1 \dots \alpha_{k_i+1}\beta^{k_i+1}\rho_0 \leq \rho_i < \alpha_1 \dots \alpha_{k_i} \beta^{k_i} \rho_0.\]
Choose $J$ large enough such that $\rho_i \geq \alpha_1 \dots \alpha_J\beta^J$ for all $i$. Then counting
the number of game intervals covered
at stage $J$ we get: 
\[
m^{J} \leq \sum_i 2m^{J-k_i}
\]
This implies
\begin{equation*}
    \begin{split}
        \frac{1}{2} \leq \sum_{i}m^{-k_i}
        & = \sum_i ((\sqrt[k_i]{\alpha_1 \dots \alpha_{k_i}}\beta)^{\frac{\log m}{\log \sqrt{\alpha_1 \dots \alpha_{k_i}}\beta}})^{-k_i}\\
        & = \sum_i ({\alpha_1 \dots \alpha_{k_i}\beta^{k_i}})^{\frac{\log m}{|\log {\sqrt{\alpha_1 \dots \alpha_{k_i}}}\beta|}}\\
        & \leq \sum_{i} (\rho_i)^{\frac{\log m}{|\log {\sqrt{\alpha_1 \dots \alpha_{k_i}}}\beta|}}\\
        & \leq \sum_{i} (\rho_i)^{\frac{\log m}{|\log\frac{\alpha}{2}\beta|}}
    \end{split}
\end{equation*}
Taking the infimum over all such covers $\{B_i\}$ gives
\[
\mathcal H^{\frac{\log m}{|\log \frac\alpha 2 \beta|}}(S) \geq \frac 12\cdot
\]
Letting $\beta \to 0$ we get $\dim_H(S)=1$.
\end{proof}

The above Proposition, together with the following Theorem, naturally implies Theorem~\ref{maintheorem}.

\begin{theorem}
    $\BA^{\gamma} \setminus \BA$ is \textit{rapid-winning}.
\end{theorem}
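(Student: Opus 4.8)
The plan is to play the rapid game dynamically on the space of unimodular grids $Y_2$, following the Dani correspondence (Theorem~\ref{theoremdani}). By the Proposition, it suffices to show Alice has a winning strategy. The strategy must simultaneously achieve two things: (i) push the outcome $\alpha$ into $\BA^{\gamma}$, i.e.\ keep the forward orbit $g_t\Lambda_{\alpha,\gamma}$ inside some $F_\epsilon$; and (ii) force $\alpha\notin\BA$, i.e.\ force the orbit $g_t\Lambda_\alpha$ to leave $\pi^{-1}(K_\epsilon)$ for every $\epsilon>0$ — equivalently, to return arbitrarily close to becoming a very short lattice vector infinitely often. The mechanism exploiting the rapid game is precisely this: when Bob is forced (to avoid losing by default) to choose a very small $\alpha_n$, Alice's ball shrinks by an extra factor, which corresponds to letting the $g_t$-flow run for an extra long time; during that long window she can steer the homogeneous orbit $g_t\Lambda_\alpha$ deep into the cusp (producing a very short vector, violating the $\BA$ condition at that scale) and then bring it back out, all while keeping the grid orbit $g_t\Lambda_{\alpha,\gamma}$ safely inside $F_\epsilon$.

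First I would set up the standard dictionary between moves in Schmidt-type games on $[0,1)$ and the $g_t u_x$ dynamics: an interval of radius $\rho$ around $x$ corresponds (up to bounded multiplicative constants) to the lattice $g_{t}\Lambda_x$ for $t\sim\log(1/\rho)$, and specifying $x$ more precisely within that interval corresponds to specifying $g_t\Lambda_x$ more precisely while leaving its coarse location free. Next I would show the core \emph{avoidance} step for $\BA^{\gamma}$: this is the inhomogeneous analogue of the classical fact that $\BA$ is winning. One shows that at each stage Alice can choose her sub-ball so that a suitable short vector of the grid $g_t\Lambda_{x,\gamma}$ is kept bounded away from $0$ by a fixed $\epsilon$; because the grid has no distinguished $0$ vector, at each relevant scale there is at most one lattice translate that could come close, and Alice has enough room (since $\alpha_n\le\alpha$ with $\alpha$ fixed, the room she gives up is uniformly bounded) to dodge it. This should be packaged as: for an appropriate fixed $\epsilon=\epsilon(\alpha,\beta)$, Alice can guarantee $g_t\Lambda_{x_\infty,\gamma}\in F_\epsilon$ for all $t>0$, using only a bounded amount of "freedom" per turn.

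Then I would implement the \emph{escape} step for the complement of $\BA$, which is where the rapid game is essential. Suppose at stage $n$ Bob chooses $\alpha_n$ small; Alice's interval $A_n$ has radius $\alpha_n\rho_n$, which in dynamical terms buys her a time increment $\Delta t\sim\log(1/\alpha_n)$. During this long increment she chooses $y_n$ so that for some intermediate time $t$ the homogeneous lattice $g_t\Lambda_{x_\infty}$ has a nonzero vector of length $<\epsilon_n$, where $\epsilon_n\to0$ as the corresponding $\alpha_n\to0$ — this is possible because a long flow segment with a free starting position can be routed through any prescribed depth in the cusp (this is a one-dimensional Minkowski/continued-fraction statement: one is free to prescribe a large partial quotient). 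Crucially, one must check this cusp excursion of $\Lambda_{x_\infty}$ is compatible with keeping $\Lambda_{x_\infty,\gamma}$ in $F_\epsilon$; the point is that the homogeneous part going deep into the cusp means there is a \emph{lattice} vector near $0$, but the grid's relevant \emph{translated} vectors can still be kept away from $0$ — precisely because $\gamma\notin\Z$, the translate never collapses onto the short lattice vector. Since Bob must make $\inf_n\alpha_n=0$ (else he loses by default), these forced small-$\alpha_n$ stages recur infinitely often with $\epsilon_n\to0$, so $g_t\Lambda_{x_\infty}\notin\pi^{-1}(K_\epsilon)$ for every $\epsilon$, giving $x_\infty\notin\BA$.

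The main obstacle I anticipate is the \emph{compatibility} of the two requirements during the forced cusp excursions: showing that Alice can drive the homogeneous orbit arbitrarily deep into the cusp while keeping the grid orbit uniformly inside a \emph{single fixed} $F_\epsilon$ — the $\epsilon$ for the $\BA^{\gamma}$ side must not degrade as the $\epsilon_n$ for the escape side shrinks. Quantitatively one has to track which grid vectors are short when the lattice has one very short vector $v$ and one long vector; the dangerous grid vectors are of the form (short multiple of $v$) + $\widetilde\gamma$-translate, and one must use a Diophantine-type lower bound coming from $\gamma\notin\Z$ (or, since $\gamma$ may itself be well approximable, a more careful argument choosing the excursion times to avoid the finitely many scales where $\gamma$ is badly positioned) to keep these bounded below. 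Making this uniform, and interleaving the "bounded freedom" avoidance moves with the "large freedom" escape moves without the two strategies interfering, is the technical heart of the proof.
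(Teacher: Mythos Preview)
Your overall architecture matches the paper's: a ``default'' phase that keeps the grid orbit in a fixed $F_\epsilon$, interleaved with an ``auxiliary'' phase, triggered when \textbf{Bob} plays a small $\alpha_n$, in which \textbf{Alice} sends the homogeneous orbit deep into the cusp and back. You also correctly isolate the compatibility of these two phases as the crux. However, your proposed mechanism for that compatibility is a genuine gap. You suggest controlling the dangerous grid vectors via a ``Diophantine-type lower bound coming from $\gamma\notin\Z$'', possibly refined by choosing excursion times. This cannot yield a \emph{uniform} $\epsilon$: after many rounds of the game the effective translate is no longer $(\gamma,0)$ but $g_t u_x(\gamma,0)$ reduced modulo the current lattice, and its position relative to the short lattice direction is essentially arbitrary; no arithmetic information about $\gamma$ survives to bound it below. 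Moreover, during a deep cusp excursion the lattice has one very short vector, so ``at most one grid point can be close per scale'' fails badly---if the translate happens to lie near the line through the short vector, many grid points are simultaneously short.

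The paper resolves this geometrically, and the key extra ingredient you are missing is that the default strategy is designed to keep the game state not just in $F_{\zeta'}$ but also in $\pi^{-1}(K_\zeta)$, i.e.\ the \emph{homogeneous} part is brought back to a fixed compact before every excursion. From such a state one can pick \emph{two} primitive spanning vectors $\aa,\bb\in\Lambda$ of size $\asymp_\zeta 1$; since the grid point $\rr$ satisfies $\dist(\rr,\Lambda)\geq\zeta'$, it must be $\gtrsim\zeta$ away from at least one of the lines $\Lambda+\R\aa$, $\Lambda+\R\bb$. \textbf{Alice} then chooses that direction, say $\aa$, and sets $x=a_1/a_2+e^{-t}$, which sends $\aa$ down to size $\asymp e^{-t/2}$ at the midpoint and back to size $\asymp 1$ at time $t$. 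The compatibility is then a one-line area argument: for any grid point $\xx$, $\|g_s u_x\aa\|\cdot\|g_s u_x\xx\|\geq|\aa\wedge\xx|\gtrsim\zeta$, and since $\|g_s u_x\aa\|\lesssim 1$ throughout $0\leq s\leq t$, every grid point stays $\gtrsim\zeta$ away from the origin. The hypothesis $\gamma\notin\Z$ enters only once, to ensure the initial grid is genuinely inhomogeneous so that the default strategy can reach $F_{\zeta'}$ at all; after that everything is uniform in $\gamma$. Your outline would become correct once you replace the Diophantine heuristic by this reset-to-compact plus choice-of-direction plus wedge-product argument.
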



\begin{lemma}
If $S$ is $(\alpha,\beta)$-strong winning, $\alpha' \leq \alpha$, and $\beta \leq \tfrac 12\alpha\beta'$, then $S$ is $(\alpha',\beta')$-rapid winning.
\end{lemma}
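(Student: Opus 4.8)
The plan is to show that Alice can win the $(\alpha',\beta')$-rapid game on $S$ by ``simulating'' a play of the $(\alpha,\beta)$-strong game for which she already has a winning strategy. The point is that each rapid-game round, in which Bob shrinks by a factor $\alpha_n$ (with $\alpha_n\le\alpha'\le\alpha$) and then by $\beta'$, can be reinterpreted as a legal strong-game round: Alice is allowed to pick any ball of radius $\ge\alpha\rho_n$, in particular the ball $A_n$ of radius $\alpha_n\rho_n$ that the rapid-game rules force on her (this uses $\alpha_n\le\alpha$, wait — it uses that $\alpha_n\rho_n\ge\alpha\rho_n$? no, the other way). Let me restate: in the strong game Alice must pick radius $\rho_n'\ge\alpha\rho_n$; in the rapid game she is forced to pick radius exactly $\alpha_n\rho_n\le\alpha\rho_n$, which is \emph{too small} to be a legal strong-game move directly. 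So the simulation has to be organised more cleverly: Alice should only ``register'' a strong-game move every so often, namely whenever the accumulated shrinking since her last registered move has brought the radius down past the strong-game threshold.

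Concretely, I would have Alice maintain a strong-game position and a pointer into the rapid game. Starting from Bob's rapid ball $B_0$ of radius $\rho_0$, Alice treats it as Bob's strong ball. She consults her strong strategy to get a ball $A^*$ of radius $\rho^* \ge \alpha\rho_0$. Now she plays passively in the rapid game — making the forced moves $A_n$ of radius $\alpha_n\rho_n$ inside whatever Bob does — but she must ensure $A_n\subseteq A^*$. She can do this as long as $B_n\subseteq A^*$, and then $A_n\subseteq B_n\subseteq A^*$ automatically. She keeps doing this until the first round $N$ at which $\rho_N$ has dropped below $\alpha\rho_0$; actually she wants the rapid ball $B_N$ to be an admissible strong-game move for Bob \emph{in response to} $A^*$, i.e. she needs $\rho_N \ge \beta\rho^*$ and $B_N\subseteq A^*$. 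The hypothesis $\beta\le\tfrac12\alpha\beta'$ is exactly what guarantees that between consecutive registered rounds there is room: after one rapid round the radius is multiplied by $\alpha_n\beta'$, and since $\alpha_n$ can be as large as $\alpha'\le\alpha$ the radius drops by at most a factor $\alpha\beta'\ge 2\beta$, so it cannot ``overshoot'' the strong-game window $[\beta\rho^*,\ \rho^*]$ — more precisely, the first time the running rapid radius falls below $\alpha\rho_{\text{last registered}}$ it is still $\ge\beta\rho^*$. Then Alice registers $B_N$ as Bob's strong move, consults her strong strategy again for the next $A^*$, and repeats.

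The one thing that has to be checked besides the radius bookkeeping is that this registration happens \emph{infinitely often}, i.e. that the rapid radii really do tend to $0$ — otherwise the simulated strong game is finite and produces no outcome point. But if the rapid radii did not tend to $0$ then $\inf_n\alpha_n>0$ (since $\rho_{n+1}=\beta\alpha_n\rho_n$ forces $\prod\alpha_n$ to vanish unless radii go to $0$), and in that case Alice wins the rapid game by default, so there is nothing to prove. Hence we may assume $\rho_n\to 0$, the registrations are infinite, and the nested rapid balls $B_n$ share their unique limit point with the nested registered strong balls; since Alice followed her strong winning strategy on the latter, that limit point lies in $S$, so Alice wins the rapid game. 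Formally one should also note $\alpha'\le\alpha$ is used to ensure every forced rapid shrink factor $\alpha_n\le\alpha'\le\alpha$, which is what makes the ``no overshoot'' estimate go through.

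I expect the main obstacle to be the precise inductive statement tying together the two games: one must choose the registration rule so that (i) at each registration the current rapid ball is a legal Bob-move in the strong game relative to the last Alice strong-move (needs the lower bound $\rho_N\ge\beta\rho^*$, where the factor $\tfrac12$ provides the slack to absorb the worst-case single-round drop), and (ii) Alice's forced rapid moves in between always stay inside the current strong ball $A^*$ (immediate once $B_n\subseteq A^*$, which propagates since each rapid ball sits inside the previous rapid ball). Getting these two invariants to coexist, and identifying exactly where $\beta\le\tfrac12\alpha\beta'$ and $\alpha'\le\alpha$ are consumed, is the crux; the rest is routine nesting and a limit argument.
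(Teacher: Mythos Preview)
Your overall plan — transfer a winning strong-game strategy to the rapid game by simulation — is the right one, but the direction of your simulation is backwards and this causes a genuine gap at the ``no overshoot'' step.

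You argue that one rapid round multiplies the radius by $\alpha_n\beta'$, and since $\alpha_n\le\alpha'\le\alpha$ this factor is at most $\alpha\beta'$, hence at least $2\beta$, so the running radius cannot skip past the strong-game window $[\beta\rho^*,\rho^*]$. But the inequality $\alpha_n\le\alpha$ only bounds $\alpha_n$ from \emph{above}; it gives no lower bound. In the rapid game Bob is free to choose $\alpha_n$ arbitrarily small, and then the single-round factor $\alpha_n\beta'$ is arbitrarily small. Concretely, already at the first step $\rho_1=\beta'\alpha_0\rho_0$ can be made smaller than $\beta\rho^*$ (indeed smaller than any prescribed positive number) by taking $\alpha_0$ small enough, so $B_1$ is \emph{not} a legal Bob-move in the strong game after $A^*$. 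Your invariant (i) therefore fails, and the simulation breaks down. The hypothesis $\alpha'\le\alpha$ is not what prevents overshoot; nothing in the rapid-game rules does.

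The fix, and this is what the paper does, is to run the simulation the other way: inside a \emph{single} rapid round $(B_n,\alpha_n)$ Alice plays \emph{many} rounds of the strong game. She treats $B_n$ as a strong Bob-move, lets her strong strategy produce $A_{n,1}$, then herself feeds in the ``dummy'' Bob-reply $B_{n,1}=B(x_{n,1},\tfrac12\rho_{n,1})$ (legal since $\tfrac12\ge\beta$), gets $A_{n,2}$, and iterates until the strong-game radius $\rho_{n,k}$ satisfies $\tfrac12\alpha\rho_{n,k}\le\alpha_n\rho_n$. At that point a ball of the rapid-required radius $\alpha_n\rho_n$ centred at $x_{n,k}$ contains $A_{n,k}$, so she can legally play it in the rapid game; and Bob's rapid reply $B_{n+1}$ of radius $\beta'\alpha_n\rho_n\ge\tfrac12\alpha\beta'\rho_{n,k}\ge\beta\rho_{n,k}$ is then a legal strong Bob-move after $A_{n,k}$ — this is exactly where $\beta\le\tfrac12\alpha\beta'$ is used. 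The point is that because $\alpha_n$ can be tiny, one rapid round may correspond to \emph{many} strong rounds, not the reverse.
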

\begin{proof}
Suppose \textbf{Alice} has a winning strategy for the $(\alpha,\beta)$-strong game. Consider a move $B_n = B(x_n,\rho_n)$ for \textbf{Bob} in the $(\alpha',\beta')$-rapid game, together with a choice of parameter $\alpha_n\leq\alpha'$. We will treat such a move as also being a move for \textbf{Bob} in the strong game. \textbf{Alice} responds in the strong game with a ball $A_{n,1} = B(x_{n,1},\rho_{n,1}) \subseteq B_n$ with $\rho_{n,1}\geq \alpha \rho_n$. \textbf{Bob} responds by choosing the ball $B_{n,1} = B(x_{n,1},\tfrac 12 \rho_{n,1})$. \textbf{Alice} responds with $A_{n,2} \subseteq B_{n,1}$, and so on until $\tfrac 12\alpha\rho_{n,k} \leq \alpha_n \rho_n$. Then \textbf{Alice} responds in the rapid game with the move $A_{n+1} = B(x_{n,k},\alpha_n\rho_n)$. We then denote \textbf{Bob}'s response in the rapid game as $B_{n+1} = B(x_{n+1},\rho_{n+1})$ with $\rho_{n+1} = \beta' \alpha_n\rho_n$.

We need to show that $A_{n+1}\subseteq B_{n,k}$, and that $B_{n+1}$ is a legal move for \textbf{Bob} in the strong game after \textbf{Alice} plays $A_{n,k}$. For the former, note that since $\alpha_n \leq \alpha' \leq \alpha$, we have $\rho_{n,1} \geq \alpha_n \rho_n$, and if $k > 1$ we have and thus since $\rho_{n,k}/\rho_{n,k-1} \geq \tfrac 12 \alpha$ and $\tfrac 12\alpha\rho_{n,k-1} > \alpha_n \rho_n$, which implies $\rho_{n,k} \geq \alpha_n \rho_n$. Either way, we get $\rho_{n,k} \geq \alpha_n\rho_n$ which implies $A_{n+1}\subseteq B_{n,k}$.

For the latter, it suffices to show that $\rho_{n+1} \geq \beta \rho_{n,k}$. But $\tfrac 12\alpha\beta'\rho_{n,k} \leq \beta'\alpha_n \rho_n = \rho_{n+1}$ so it suffices to take $\beta \leq \tfrac 12\alpha\beta'$.
\end{proof}

\begin{corollary}
\label{corollarySWimpliesRW}
Any strong winning set is rapid winning.
\end{corollary}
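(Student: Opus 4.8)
The plan is to obtain this corollary as an immediate consequence of the preceding Lemma; essentially no new argument is needed, and the only thing to watch is the quantifier bookkeeping relating ``$\alpha$-winning'' to ``winning''. Recall that a \emph{strong winning} set $S$ is, by definition, $\alpha$-strong winning for some $\alpha \in (0,1)$, which means $S$ is $(\alpha,\beta)$-strong winning for every $\beta \in (0,1)$. Dually, to prove that $S$ is \emph{rapid winning} it is enough to exhibit a single $\alpha' \in (0,1)$ such that $S$ is $(\alpha',\beta')$-rapid winning for all $\beta' \in (0,1)$.

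So I would fix a strong winning set $S$, take $\alpha$ as above, and simply set $\alpha' = \alpha$. Given an arbitrary $\beta' \in (0,1)$, I would introduce the auxiliary parameter $\beta = \tfrac12\alpha\beta'$. Because $0 < \alpha < 1$ and $0 < \beta' < 1$, this $\beta$ lies in $(0,1)$, so $S$ is $(\alpha,\beta)$-strong winning; together with $\alpha' \le \alpha$ and $\beta \le \tfrac12\alpha\beta'$, all hypotheses of the Lemma hold, and it follows that $S$ is $(\alpha',\beta')$-rapid winning. Since $\beta' \in (0,1)$ was arbitrary, $S$ is $\alpha'$-rapid winning, and hence rapid winning.

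I do not expect any genuine obstacle here: the real content — simulating many rounds of the strong game (against an artificial opponent who keeps halving the radius) inside a single round of the rapid game — has already been carried out in the Lemma. The only point worth a second glance is whether the auxiliary $\beta$ can always be chosen in the admissible range $(0,1)$, but since the only constraint is $\beta \le \tfrac12\alpha\beta' < 1$, this is automatic, and any smaller positive value of $\beta$ would serve equally well.
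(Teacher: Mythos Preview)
Your proposal is correct and matches the paper's approach: the paper states the corollary without proof, treating it as an immediate consequence of the preceding Lemma, and your quantifier bookkeeping (taking $\alpha' = \alpha$ and $\beta = \tfrac12\alpha\beta'$) is precisely the intended verification.
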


\subsection{Reformulating the games on grids} \label{sec:3.3}
Here, we reformulate the Schmidt, strong, and rapid games on the space of unimodular grids $Y_2$ instead of $\R$. At each stage, instead of a ball $B(x,\rho)$, the game state will be of the form $g_t u_x \Lambda_{*}$. Here, $\Lambda_{*} = \Z^2 + \Tilde{\gamma}$ is the initial state, and $u_x$ and $g_t$ encode the centre and radius of the ball respectively, such that $x$ is the centre and $t=-\tfrac12 \log(\rho)$ where $\rho$ is the radius. For example, the rapid game would be reformulated as follows:
\begin{itemize}
    \item At the $0$-th stage, \textbf{Bob} applies a $u_{x_0}$ transformation to $\Lambda_{*}$ followed by a $g_{t_0}$ transformation, with $t_0=-\tfrac12 \log(\rho_0)$ getting the grid $\Lambda_{B_0}=g_{t_0}u_{x_0}\Lambda_*$. He then chooses a parameter $s_0$ with $s_0=-\tfrac12 \log(\alpha_0)$, so $s_0 \in [-\tfrac12 \log\alpha , +\infty )$.  
    \item At the $n$-th stage, \textbf{Bob} chooses a grid  $\Lambda_{B_n}=g_{t_n}u_{x_n}\Lambda_*$, together with a parameter $s_n\in [-\log\alpha , +\infty )$. Then \textbf{Alice} chooses a grid of the form $g_{t_n'} u_{x_n'} \Lambda_{*}$, and from the above reformulation, $e^{-2t_n'}$ will be her radius and $x_n'$ will be her centre. Therefore we get $t_n'=t_n+s_n$, and $x_n'=y_n$.\\
    We can also describe her move in terms of application of transformations to \textbf{Bob}'s grid:
    \begin{equation*}
        \begin{split}
        g_{t_n+s_n}u_{y_n}\Lambda_* & = g_{s_n}g_{t_n}u_{x_n+e^{-2t_n}z_n}\Lambda_* \\
        & = g_{s_n}g_{t_n}u_{e^{-2t_n}z_n}u_{x_n}\Lambda_*\\
        & = g_{s_n}u_{z_n}g_{t_n}u_{x_n}\Lambda_* \\
        & = g_{s_n}u_{z_n}\Lambda_{B_n},
        \end{split}
    \end{equation*}
    where in the first line we let $y_n=(y_n-x_n)+x_n$ and define $z_n=\frac{y_n-x_n}{e^{-2t_n}}$ to be the distance of the centres rescaled according to the radius, so $\|z_n\| \leq 1-e^{-2s_n}$.
    \item At the $(n+1)$-th stage, with the same argument, \textbf{Bob} chooses the grid:
    \[\Lambda_{B_{n + 1}}= g_{-\log \beta} u_{w_n} \Lambda_{A, n}= g_{t_n+s_n-\log \beta} u_{x_n+ e^{-2t_n} (z_n+e^{-2s_n} w_n)} \Lambda_* ,\]
    where $w_n=\frac{x_{n+1}-y_n}{e^{-2(s_n+t_n)}}$ is the distance of the centres properly rescaled.
\end{itemize}
As explained above, at each step instead of choosing a ball, a $u_x$ transformation followed by a $g_t$ transformation is applied to the grid, where \textbf{Alice} chooses $z_n$ and \textbf{Bob} chooses $w_n$ and $s_n$. Note that since the radii tend to zero, after an infinite number of plays, the trajectory of the game state will lie within finite Hausdorff
distance of a trajectory $(g_tu_{x_\infty}\Lambda_*)_{t \geq 0}$. As before, \textbf{Alice} wins if $x_\infty$ lies in the target set.

\section{Proof of the main result}
\subsection{The intuitive idea}
We first attempt to describe the strategy in a more intuitive manner and then proceed to the formal proof.
\par 
Using Dani's correspondence we need to describe a strategy which ensures that for some $\epsilon>0$ the game state stays in the region 
\[F_{\epsilon} = \{\Lambda+\rr \text{ unimodular grid} : (\Lambda+\rr) \cap B(0, \epsilon) = \emptyset \},\]
but leaves $\pi^{-1}(K_\epsilon)$ for every $\epsilon>0$.
Recall from Section~\ref{sec:2}  that $K_\epsilon$ is the region
\[K_\epsilon = \{\Lambda \text{ unimodular lattice} : \Lambda \cap B(0, \epsilon) = \{0\}\},\]
and $\pi$ is just the projection map from the space of unimodular grids to lattices.
\par
Since $\BA^{\gamma}\cap \BA$ is strong winning, by Corollary~\ref{corollarySWimpliesRW} it is also rapid winning. This suggests that there is a rapid winning strategy which for any initial game state $\Lambda + \rr$ brings the game trajectory in $F_\epsilon \cap \pi^{-1}(K_\epsilon)$ for some $\epsilon > 0$ independent of $\Lambda+\rr$ (see Section~\ref{default} for a proof of this). Call this strategy the \textit{default strategy}. \textbf{Alice}'s strategy will be to use the default strategy for most of the time, but occasionally use a different strategy which we call the \textit{auxiliary strategy} (elaborated in Section~\ref{auxiliary}) that ensures the game state leaves $ \pi^{-1}(K_\epsilon)$ for every $\epsilon > 0$.
\par
 Designing \textbf{Alice}'s auxiliary strategy involves choosing a parameter $x$ for her $u_x$ transformation which is followed by a $g_t$ transformation with parameter $t$ (which she has knowledge of but no choice over). Note that, given how we have defined the rapid game, $t$ will eventually become arbitrarily large, corresponding to \textbf{Bob} choosing a small $\alpha_n$ parameter; otherwise, we get $\inf_n \alpha_n > 0$ and \textbf{Alice} will win by default.
 \par
\textbf{Alice}'s strategy will be to choose $u_x$ such that following the trajectory from the current game state to the next game state, i.e.\ for $0 <  s \leq t$, $g_su_x\Lambda^{\gamma}$ goes far off the cusp (for some maximising $s$) and comes back close to our initial bounded regions at time $t$ itself.
 This means at time $t$ \textbf{Alice} is in a position to apply the default strategy again to get back to $F_\epsilon \cap \pi^{-1}(K_\epsilon)$ and repeat the auxiliary strategy, every time going further down into the cusp and back.
 
 \subsection{The rapid-winning strategy for $\BA^{\gamma} \setminus \BA$}

\subsubsection{Preliminaries}

Before explaining the game strategy, it would be useful to introduce some notation which we will be using extensively. We write $A \lesssim B$ or $A\lesssim_\times B$ when $A\leq cB$ for some constant $c>0$. Also we write $A \asymp B$ when $A\lesssim B \lesssim A$, and we say that the two quantities are comparable. We add a subscript e.g.\ $A \lesssim_{\epsilon} B$ to indicate that the constant $c$ depends on $\epsilon$.\\ 
In the rest of this section, we introduce some lemmas and definitions required to formulate the strategy. The following lemma will help us control the distances of points under a bounded linear transformation.
\begin{lemma}\label{boundedmatrix}
    Let $\xx$ and $\yy$ be points in $\R^2$, and $A \in SL_2(\R)$ a $2\times 2$ matrix transformation with coefficients bounded by some constant $C$. Then if $dist(\xx,\yy) \asymp \epsilon$ for some $\epsilon > 0$, we also have $dist(A\xx,A\yy) \asymp_C \epsilon$.
\end{lemma}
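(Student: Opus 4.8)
The plan is to reduce the statement to a two-sided bound on $\|A\zz\|$ where $\zz = \xx - \yy$. Since $\dist(A\xx, A\yy) = \|A(\xx-\yy)\| = \|A\zz\|$ and $\dist(\xx,\yy) = \|\zz\| \asymp \epsilon$ by hypothesis, it suffices to prove $\|A\zz\| \asymp_C \|\zz\|$ for every $\zz \in \R^2$ and then chain the comparabilities.

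First I would establish the upper bound. If the entries of $A$ are bounded in absolute value by $C$, then each coordinate of $A\zz$ is a sum of two terms of size at most $C\|\zz\|$, so $\|A\zz\| \leq 2\sqrt{2}\,C\|\zz\| \lesssim_C \|\zz\|$; the exact constant is immaterial. This already gives $\dist(A\xx,A\yy) \lesssim_C \epsilon$. For the lower bound, I would exploit that $A \in \SL_2(\R)$: writing $A = \begin{pmatrix} a & b \\ c & d \end{pmatrix}$ we have $ad-bc=1$, hence $A^{-1} = \begin{pmatrix} d & -b \\ -c & a \end{pmatrix}$ has entries again bounded by $C$. Applying the upper-bound estimate to $A^{-1}$ and the vector $A\zz$ yields $\|\zz\| = \|A^{-1}(A\zz)\| \lesssim_C \|A\zz\|$, i.e. $\|A\zz\| \gtrsim_C \|\zz\| \asymp \epsilon$. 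Combining the two estimates gives $\dist(A\xx, A\yy) \asymp_C \epsilon$, as required.

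There is essentially no obstacle here; the only point worth flagging is that the determinant condition (or at least a lower bound on $|\det A|$) is genuinely needed for the lower bound — boundedness of the entries of $A$ alone does not prevent $A$ from being nearly singular, in which case $A\zz$ could be much shorter than $\zz$. Since all matrices to which the lemma is applied in the sequel lie in $\SL_2(\R)$, this costs nothing, and one could equally phrase the argument in terms of $\|A\|_{\mathrm{op}}$ and $\|A^{-1}\|_{\mathrm{op}}$ if preferred.
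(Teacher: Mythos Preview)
Your argument is correct and essentially identical to the paper's: both obtain the upper bound from the entrywise (equivalently, operator-norm) bound on $A$, and the lower bound by observing that $A\in\SL_2(\R)$ forces $A^{-1}$ to satisfy the same bound and then applying the upper estimate to $A^{-1}$. The paper phrases this via $\|A^{-1}\|=\|A\|\lesssim C$, while you use the explicit adjugate formula for $A^{-1}$; these are the same idea.
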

\begin{proof}
We have
\[
\|A\yy - A\xx\| \leq \|A\| \|\yy - \xx\| \lesssim C \epsilon
\]
where $\|A\|$ is the operator norm of $A$. Conversely, since $A\in \SL_2(\R)$ we have $\|A^{-1}\| = \|A\| \lesssim C$ and thus
\[
\epsilon \lesssim \|\yy - \xx\| \leq \|A^{-1}\| \|A\yy - A\xx\| \lesssim C \|A\yy - A\xx\|.
\]
\end{proof}
\begin{definition}
The \textbf{inhomogeneous minimum} $\mu(\Lambda)$ of a lattice $\Lambda \in \R^2$ is the supremum of the distances from all points in $\R^2$ to their closest lattice point. More precisely,
    \[\mu(\Lambda)= \sup_{x \in \R^2}\inf_{\pp \in \Lambda} \| \xx - \pp \|.\]
\end{definition}
The following lemma is due to Jarn\'ik: 
\begin{lemma}
    The inhomogeneous minimum $\mu$ is related to the successive minima
    \[
    \lambda_i = \inf\{\lambda: \Lambda\cap B(0,\lambda) \textup{ contains $i$ linearly independent vectors}\}
    \]
    by the inequality:
    \[\mu \leq \frac{1}{2}(\lambda_1 + \lambda_2 + \dots + \lambda_n).\]
\end{lemma}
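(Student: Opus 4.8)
The plan is to reduce the statement to a single rounding estimate. Fix an arbitrary point $\xx \in \R^n$; it suffices to produce a point $\pp \in \Lambda$ with $\|\xx - \pp\| \leq \tfrac{1}{2}(\lambda_1 + \cdots + \lambda_n)$, since taking the supremum over $\xx$ then gives the bound on $\mu(\Lambda)$. The mechanism is to choose linearly independent lattice vectors $v_1, \dots, v_n \in \Lambda$ with $\|v_i\| = \lambda_i$, use them as a basis of $\R^n$ to write $\xx = \sum_{i=1}^n c_i v_i$, and round each coordinate: let $m_i \in \Z$ be a nearest integer to $c_i$, so that $|c_i - m_i| \leq \tfrac{1}{2}$, and set $\pp = \sum_{i=1}^n m_i v_i$. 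Then $\pp$ lies in the sublattice $\Z v_1 + \cdots + \Z v_n \subseteq \Lambda$, and by the triangle inequality
\[
\|\xx - \pp\| = \Big\| \sum_{i=1}^n (c_i - m_i) v_i \Big\| \leq \sum_{i=1}^n |c_i - m_i|\,\|v_i\| \leq \frac{1}{2} \sum_{i=1}^n \lambda_i,
\]
which is exactly the claim. Note it does no harm that $\Z v_1 + \cdots + \Z v_n$ may be a proper sublattice of $\Lambda$: we only need \emph{some} point of $\Lambda$ close to $\xx$.

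The one point that needs justification is the existence of linearly independent $v_1, \dots, v_n \in \Lambda$ with $\|v_i\| = \lambda_i$. I would construct them greedily: let $v_1$ be a shortest nonzero vector of $\Lambda$, and having chosen $v_1, \dots, v_{i-1}$, let $v_i$ be a shortest vector of $\Lambda$ not lying in the subspace $V_{i-1}$ spanned by $v_1, \dots, v_{i-1}$. Such a minimiser exists because $\Lambda$ is a full-rank discrete subgroup (only finitely many lattice points lie in any bounded ball, and vectors outside $V_{i-1}$ exist), and $v_1, \dots, v_n$ are linearly independent by construction, hence a basis of $\R^n$. To see $\|v_i\| = \lambda_i$: since $v_i \notin V_{i-1} \supseteq V_{j-1}$ for $j \leq i$, the greedy choice forces $\|v_1\| \leq \cdots \leq \|v_i\|$, so $v_1, \dots, v_i$ are $i$ linearly independent vectors of norm at most $\|v_i\|$, giving $\lambda_i \leq \|v_i\|$. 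Conversely, for every $\epsilon > 0$ there are $i$ linearly independent lattice vectors of norm $< \lambda_i + \epsilon$; at least one of them lies outside the $(i-1)$-dimensional space $V_{i-1}$, so $\|v_i\| < \lambda_i + \epsilon$, and letting $\epsilon \to 0$ yields $\|v_i\| \leq \lambda_i$. Hence $\|v_i\| = \lambda_i$.

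Combining the two parts completes the proof. The only mildly delicate step is this verification that the greedy vectors realise the successive minima exactly — in particular that the infimum defining each $\lambda_i$ is attained — which is precisely where the discreteness of $\Lambda$ enters; the final rounding estimate is then immediate. Since the present paper only needs the case $n = 2$, one could alternatively write out the two-line argument directly in that case, or simply cite the greedy construction and the value of the successive minima as standard facts from the geometry of numbers.
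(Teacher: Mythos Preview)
Your proof is correct and is the classical Jarn\'ik argument via coordinate rounding in a basis realising the successive minima. The paper itself does not give a proof of this lemma but simply cites it from Gruber--Lekkerkerker's \emph{Geometry of Numbers}; your argument is essentially the one found there.
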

\begin{proof}
    ~\cite[Theorem 1 in Chapter 2.13]{GeomNumbGL}.
\end{proof}
Let $\Lambda \in K_\epsilon$, i.e.\ $\lambda_1 \geq \epsilon$. Then by Minkowski's second theorem~\cite[Theorem 1 in Chapter 2.9]{GeomNumbGL}, $\lambda_2 \leq 4 \epsilon^{-1}$. This implies:
\[\mu \leq \frac{1}{2}(\lambda_1 + \lambda_2) \leq \lambda_2 \leq 4 \epsilon^{-1}.\]
\begin{remark}
    The above line of argument states that for a $2$-dimensional bounded lattice in $K_\epsilon$ the inhomogeneous minimum is bounded with the bounding parameter depending on $\epsilon$. This is a simple example of many results connecting successive minima and the inhomogeneous minimum. Further discussion can be found in~\cite{GeomNumbGL}.
\end{remark}

\subsubsection{The Default Strategy}\label{default}

As explained in Section~\ref{sec:3.3}, we are concerned with a game taking place on the space $Y_2$. Our goal is to give a successful strategy for \textbf{Alice}. We prove in Lemmas \ref{lemmaKeps} and \ref{lemmaFeps} below that there is a strategy for \textbf{Alice} for the strong game (and hence by Corollary \ref{corollarySWimpliesRW}, also for the rapid game) which guarantees that
\begin{itemize}
\item The game state always remains in $\pi^{-1}(K_\theta)\cap F_{\theta'}$, where $\theta,\theta' > 0$ depend only on $\delta > 0$, where the game state was initially\footnote{Here ``initially'' means ``from when \textbf{Alice} started applying the default strategy''.} in $\pi^{-1}(K_\delta)\cap F_\delta$.
\item The game state eventually remains in $\pi^{-1}(K_\zeta)\cap F_{\zeta'}$, where $\zeta,\zeta' > 0$ are independent of the initial game state.
\end{itemize}
We call \textbf{Alice}'s strategy to achieve these conditions the \textit{default strategy}.
\begin{lemma}
\label{lemmaKeps}
Assuming $\alpha \leq 1/4$, we can give a strategy for the $(\alpha,\beta)$-strong game with the following property: There exists $\theta>0$ (depending on $\delta$) such that the strategy takes the homogeneous part of the game state from $K_\delta$ to $K_\zeta$ while staying within $K_\theta$ (i.e. how far into the cusp the trajectory needs to go before getting to $K_\zeta$ depends on how far into the cusp we already are). Here $\zeta>0$ is a universal constant independent of $\delta$.
\end{lemma}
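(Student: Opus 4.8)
## Proof proposal for Lemma \ref{lemmaKeps}

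\textbf{Overall approach.} The plan is to reduce this to the known fact that $\BA$ is strong winning for Schmidt's game played on $\R$, transported to the space of grids via the Dani dictionary of Section~\ref{sec:3.3}, while carefully tracking the \emph{quantitative} excursion into the cusp. The point of the statement is not merely that \textbf{Alice} can keep the homogeneous part bounded (that is just strong winning of $\BA$), but that starting from $K_\delta$ she can steer the trajectory into a fixed compact set $K_\zeta$ in finitely many steps, with the \emph{depth} of the transient excursion (the $\theta$) controlled by $\delta$ but the final target $\zeta$ universal. So the proof has two halves: a ``recovery'' phase and a ``maintenance'' phase.

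\textbf{Key steps.} First I would recall that in the grid reformulation, one step of the $(\alpha,\beta)$-strong game corresponds to applying $g_{s}u_{z}$ to the current game state with $s \ge -\tfrac12\log\alpha$ fixed by \textbf{Bob} and $\|z\| \le 1 - e^{-2s} < 1$ chosen by \textbf{Alice} (and then \textbf{Bob} applies a further $g_{-\log\beta}u_{w}$ with $\|w\|<1$). Since $\alpha \le 1/4$, \textbf{Alice} gets to expand by a factor $e^{2s} \ge 1/\alpha \ge 4$ each round, which is the room she needs. \emph{Recovery phase:} if the homogeneous part $\Lambda$ of the current state lies in $K_\delta \setminus K_\zeta$, then by Minkowski's second theorem $\lambda_1(\Lambda)\lambda_2(\Lambda) \asymp 1$ with $\lambda_1 \ge \delta$, so $\Lambda$ has a short vector only in one direction; \textbf{Alice} picks her $u_z$ (equivalently, the Schmidt-game interval) to push the long direction's length toward $1$, i.e.\ to move the lattice shape toward the well-rounded part of the cusp. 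A bounded number of steps $N = N(\delta)$ (roughly $N \asymp |\log\delta|/\log(1/\alpha)$) suffices to reach $K_{\zeta_0}$ for a universal $\zeta_0$; during these steps the homogeneous part never leaves $K_\theta$ for $\theta = \theta(\delta)$, since each application of $g_s u_z$ with bounded $s$ per step can only shrink $\lambda_1$ by a bounded factor, and we may absorb \textbf{Bob}'s intervening $g_{-\log\beta}$ moves because $\beta<1$ only helps \textbf{Alice} (she gets to choose again from inside a ball on which her analysis still applies — here I would invoke the standard ``strong game is monotone in $\beta$'' observation, and the fact that $\BA$ restricted to any interval is still strong winning). \emph{Maintenance phase:} once in $K_{\zeta_0}$, this is exactly the assertion that $\BA$ is $(\alpha,\beta)$-strong winning (for $\alpha \le 1/4$ and all $\beta$), which by the Dani correspondence and McMullen's work is known; this keeps the homogeneous part in some $K_\zeta$, $\zeta$ universal, forever after. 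Combining, set $\theta = \min(\theta(\delta), \zeta)$ and we are done.

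\textbf{Main obstacle.} The delicate point is the uniformity: I must exhibit a single strategy that simultaneously (i) guarantees the transient bound $K_\theta$ with $\theta$ depending only on $\delta$, not on \textbf{Bob}'s play, and (ii) guarantees entry into the \emph{universal} $K_\zeta$ after finitely many rounds regardless of \textbf{Bob}. The subtlety is that \textbf{Bob}'s $u_w$ moves can shift the center, and his $g_{-\log\beta}$ moves rescale, so after \textbf{Alice}'s recovery push the ``long direction became length $\asymp 1$'' gain could be partially undone; one must check that the net contraction toward well-roundedness per \textbf{Alice}-move (a factor $\ge 1/\alpha \ge 4$) strictly beats \textbf{Bob}'s per-move distortion (a factor $\le 1/\beta$, but \textbf{Alice} already folded $\beta$ into her accounting — the honest statement is that in the strong game \textbf{Alice}'s $e^{2s}$ is $\ge 4$ while \textbf{Bob}'s subsequent expansion is exactly the $\beta$ she anticipated, so the composition over one full round still contracts the eccentricity). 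I would handle this by working with the quantity $-\log\lambda_1(\Lambda_{\text{state}})$ as a Lyapunov-type function and showing \textbf{Alice} can force it to decrease by a definite amount each round until it drops below a universal threshold, after which the known strong-winning strategy for $\BA$ pins it. Lemma~\ref{boundedmatrix} and the Jarn\'ik bound $\mu(\Lambda) \le 4\epsilon^{-1}$ on $K_\epsilon$ will be used to translate between ``the lattice is in $K_\epsilon$'' and ``centers move by comparable amounts under the bounded cocycle,'' which is what makes the step count $N(\delta)$ and hence $\theta(\delta)$ explicit.
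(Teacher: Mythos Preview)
Your two-phase plan (recovery, then maintenance via the black-box strong-winning of $\BA$) is in the right spirit and could be made rigorous, but the paper proceeds far more directly with a \emph{single} explicit strategy that handles both phases at once. Since the lattice is unimodular in $\R^2$, there is at most one primitive vector in $B(0,1)$; call it $(a_1,a_2)$ and put it in the first quadrant. \textbf{Alice}'s fixed move is simply $u_x$ with $x=\alpha-1$ followed by $g_t$ with $e^{2t}=1/\alpha$. A two-line matrix computation then shows that after \textbf{Bob}'s arbitrary legal response $g_s u_y$ (with $e^{-2s}\ge\beta$, $|y|\le 1-e^{-2s}$), this particular vector has norm $>2(|a_1|+|a_2|)$, while every \emph{other} primitive vector---which started with norm $\ge 1$---can have shrunk to no less than $\zeta:=\tfrac12\alpha\beta$. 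Hence $\lambda_1$ of the game state is nondecreasing at every round (so in fact one may take $\theta=\delta$) and strictly increases until it reaches the universal $\zeta$, after which it stays there. No phase split, no Lyapunov bookkeeping, no appeal to McMullen.

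There is also a soft spot in your maintenance phase worth flagging: the black-box statement ``$\BA$ is $(\alpha,\beta)$-strong winning'' only tells you the \emph{outcome} $x_\infty$ lies in $\BA$; it does not by itself guarantee that the \emph{discrete game trajectory} remains in a fixed $K_\zeta$ with $\zeta$ independent of the initial state. To extract that uniformity you would have to open up the proof of strong-winning, and what you find there is precisely the one-move computation above---so the black box saves you nothing, and your recovery phase already \emph{is} the whole proof once you notice $\lambda_1$ never decreases. Finally, Lemma~\ref{boundedmatrix} and the Jarn\'ik inequality play no role in this homogeneous lemma; they are used only later for the grid (inhomogeneous) part and the auxiliary strategy.
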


Note that since we are dealing with the strong game rather than the rapid game, the continuous trajectory $(g_t u_{x_\infty} \Z^2)_{t\geq 0}$ lies within a bounded distance of the discrete trajectory $(\Lambda_n)_{n\in\N}$ defined by the stages of the game as in Section~\ref{sec:3.3}. So it suffices to show that the discrete trajectory returns to $K_\zeta$.
\begin{proof}
    First, note that since we are working with unimodular lattices in $\R^2$, there is at most one primitive lattice point inside the ball of radius $1$ around the origin. Call this point $(a_1,a_2)$ and without loss of generality assume it lies in the first quadrant. Then we define \textbf{Alice}'s strategy as follows: apply $u_x$ with $x= \alpha -1$ followed by $g_t$ where $t= -\tfrac 12 \log(\alpha)$ resulting in
    
    \begin{equation*} 
\begin{pmatrix}
    1/\alpha & 0\\
    0 & \alpha
\end{pmatrix}
\begin{pmatrix}
    1 & 1-\alpha\\
    0 & 1\\ 
\end{pmatrix}\begin{pmatrix}
    a_1\\
    a_2\\
\end{pmatrix}=\begin{pmatrix}
    \tfrac 1\alpha(a_1+a_2-\alpha a_2)\\
    \alpha a_2\\
\end{pmatrix}. \end{equation*}
Suppose that \textbf{Bob} responds by applying $u_y$ with $|y| \leq 1-e^{-2s}$ followed by $g_s$ where $s \leq -\tfrac 12 \log(\beta)$ resulting in
\begin{equation*} 
\begin{pmatrix}
    e^s & 0\\
    0 & e^{-s}
\end{pmatrix}
\begin{pmatrix}
    1 & -y\\
    0 & 1\\ 
\end{pmatrix}\begin{pmatrix}
    \frac 1\alpha(a_1+a_2 - \alpha a_2)\\
    \alpha a_2
\end{pmatrix}=\begin{pmatrix}
    \tfrac 1{\alpha e^{-2s}}(a_1+a_2-\alpha a_2 - \alpha y a_2)\\
    \alpha e^{-2s} a_2\\
\end{pmatrix}.
\end{equation*}
Now since $\alpha \leq 1/4$ and $e^{-2s} \leq 1$ the distance of this point to the origin is bounded below by
\[
\left|\tfrac1{\alpha e^{-2s}}a_1+\tfrac1{\alpha e^{-2s}}(1-\alpha(1+y))a_2\right| \geq 4a_1 + \tfrac 1{2\alpha e^{-2s}} a_2 > 2(|a_1| + |a_2|)
\]
which means this primitive vector is becoming larger after each round. On the other hand, there are no other primitive vectors of $\Lambda$ of size $<1$, so there are no vectors of $g_s u_y g_t u_x \Lambda$ of size $< \zeta := \tfrac 12\alpha\beta$. Therefore, the smallest primitive vector of $g_s u_y g_t u_x \Lambda$ is at least of size $\min(\tfrac 12\alpha\beta,\Delta(\Lambda))$, where $\Delta(\Lambda)$ is the size of the smallest primitive vector of $\Lambda$. This means that the size of the smallest primitive lattice vector of the game state is increasing until it reaches $\zeta$, at which point it stays above $\zeta$. This implies that eventually the game state will lie in and remain in $K_\zeta$.
\end{proof}

\begin{lemma}
\label{lemmaFeps}
Now assume that the inhomogeneous game state is in $\pi^{-1}(K_\delta) \cap F_\delta$. If $\alpha\leq 1/4$, then there exists a strategy for the $(\alpha,\beta)$-strong game that can take it to $\pi^{-1}(K_\zeta) \cap F_{\zeta'}$ while staying in $\pi^{-1}(K_\theta) \cap F_{\theta'}$ for some $\theta,\theta'>0$ depending on $\delta$. Here $\zeta,\zeta'>0$ are universal constants independent of $\delta$. 
\end{lemma}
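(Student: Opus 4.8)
The plan is to build the strategy for the grid game on top of the homogeneous strategy already constructed in Lemma~\ref{lemmaKeps}, so the task reduces to controlling the translation vector $\rr$ of the grid while the homogeneous part behaves as before. First I would recall that the game state can be written as $g_t u_x(\Z^2 + \widetilde\gamma) = g_t u_x \Z^2 + g_t u_x \widetilde\gamma$, so that $\pi$ of the state is exactly the lattice handled by Lemma~\ref{lemmaKeps}; hence if \textbf{Alice} plays the very same moves $u_x$, $x = \alpha - 1$, followed by $g_t$, $t = -\tfrac12\log\alpha$, then the homogeneous part is already guaranteed to enter and stay in $K_\zeta$ through $K_\theta$. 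What remains is to show that this same sequence of moves keeps the grid in $F_{\theta'}$ and eventually in $F_{\zeta'}$, i.e.\ that no grid point ever comes within a controlled distance of the origin.

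The key point is that once the homogeneous part lies in $K_\eta$ for some $\eta$, the Jarník--Minkowski estimate developed in the preliminaries gives $\mu(\Lambda) \leq 4\eta^{-1}$, i.e.\ every point of $\R^2$ — in particular the origin — is within distance $4\eta^{-1}$ of a lattice point, so the grid point nearest the origin has norm at most $4\eta^{-1} + \text{(small)}$; that is an \emph{upper} bound, which is the wrong direction. To get the \emph{lower} bound that places the grid in $F_{\theta'}$, I would instead track the nearest grid point to $0$ directly through the transformations, exactly as Lemma~\ref{lemmaKeps} tracks the shortest primitive vector: let $\rr_n$ be \textbf{Bob}'s translation vector at stage $n$ rescaled as in Section~\ref{sec:3.3}; the hypothesis $\Lambda + \rr \in F_\delta$ says $\|\rr_0\| \geq \delta$ (modulo the lattice). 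Applying $u_x$ followed by $g_t$ with $t \geq -\tfrac12\log\alpha \geq \tfrac12\log 4$ expands the first coordinate by $1/\alpha e^{-2s} \geq 4$ and contracts the second; one checks, using Lemma~\ref{boundedmatrix} and the fact that $u_x$ has bounded entries for the bounded range of centres in play, that the nearest grid vector to $0$ cannot shrink below a fixed multiple of $\min(\delta, \zeta')$ for a universal $\zeta' \asymp \alpha\beta$, and once it exceeds $\zeta'$ it stays above it — the same monotonicity phenomenon as in the homogeneous lemma. The subtlety is that, unlike a lattice, a grid has no distinguished point, so one must argue about whichever grid point is closest to $0$ and verify that a point can only enter the $\zeta'$-ball from outside after having been pushed out, which follows because the contracting direction is bounded by the expanding direction's growth.

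The main obstacle I anticipate is precisely this simultaneous bookkeeping: in Lemma~\ref{lemmaKeps} one only had to follow a single primitive vector, whereas here \textbf{Alice} must ensure that the combination of the lattice moving toward $K_\zeta$ \emph{and} the translation vector staying bounded away from the lattice is preserved by one and the same choice of $u_x, g_t$, for every legal response of \textbf{Bob} (who controls $y$ with $|y| \le 1 - e^{-2s}$ and $s \le -\tfrac12\log\beta$). I would handle this by noting that \textbf{Bob}'s moves are themselves given by matrices with entries bounded in terms of $\alpha, \beta$, so Lemma~\ref{boundedmatrix} converts "distance $\asymp \epsilon$ before \textbf{Bob}'s move" into "distance $\asymp_{\alpha,\beta} \epsilon$ after", uniformly; combining this with the expansion estimate from Lemma~\ref{lemmaKeps} closes the loop. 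Finally, since we are in the strong game, the discrete trajectory stays within bounded Hausdorff distance of the continuous orbit $(g_t u_{x_\infty}\Lambda_*)_{t\ge 0}$, so the discrete statement "$\Lambda_n \in \pi^{-1}(K_\zeta) \cap F_{\zeta'}$ eventually" upgrades to the continuous one with possibly smaller universal constants, and an appeal to Corollary~\ref{corollarySWimpliesRW} transfers everything to the rapid game, completing the default strategy.
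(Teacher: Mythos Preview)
Your proposal has a genuine gap at the central step. You commit \textbf{Alice} to playing literally the move $x=\alpha-1$ from Lemma~\ref{lemmaKeps} and then try to argue that this single choice also controls the grid. But the growth computation in Lemma~\ref{lemmaKeps} works only because the short primitive lattice vector $(a_1,a_2)$ lies in the first quadrant; the grid point nearest the origin is under no such constraint. If that grid point $(r_1,r_2)$ happens to satisfy $r_1/r_2\approx x$, then after \textbf{Alice}'s move it becomes $(e^t(r_1-xr_2),\,e^{-t}r_2)$ with \emph{both} coordinates small. Your appeal to Lemma~\ref{boundedmatrix} only says that a single round changes norms by a bounded multiplicative factor; iterated over many rounds that factor compounds, so you obtain neither a uniform $\theta'$ nor the monotone climb to a universal $\zeta'$. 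The ``same monotonicity phenomenon'' you invoke simply does not hold for a vector that \textbf{Alice}'s $u_x$ was not tailored to.

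The paper resolves exactly the obstacle you correctly flag---that one $u_x$ cannot serve two masters---by \emph{alternating}. Setting $\alpha'=\alpha$ and $\beta'=\beta\alpha\beta$, two consecutive rounds of the $(\alpha,\beta)$-strong game collapse to one round of an $(\alpha',\beta')$-strong game, so \textbf{Alice} may devote alternate moves to two separate strategies: one running Lemma~\ref{lemmaKeps} on the homogeneous part, the other handling the grid. The grid strategy then succeeds because, once the lattice is in $K_\theta$, any two grid points differ by a nonzero lattice vector of norm $\geq\theta$, hence $B(0,\theta/2)$ contains at most \emph{one} grid point; \textbf{Alice} can now choose $u_x$ specifically to push that single point outward, exactly as she pushed the unique short primitive lattice vector in Lemma~\ref{lemmaKeps}. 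This interleaving trick is the missing idea.
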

\begin{proof}
To start with, let $\alpha' = \alpha$ and $\beta' = \beta\alpha\beta$. Then \textbf{Alice} can alternate between two strategies in the $(\alpha',\beta')$-strong game to get a strategy in the $(\alpha,\beta)$-strong game. For her first strategy, \textbf{Alice} follows the strategy outlined in Lemma \ref{lemmaKeps}, thus guaranteeing that the homogeneous game state stays in $K_\theta$ for some $\theta>0$ depending on $\delta$, and eventually remains in $K_\zeta$ for some $\zeta>0$ independent of $\delta$. Next, for her second strategy in the $(\alpha',\beta')$-strong game, while \textbf{Alice} is in $K_\theta$, consider the ball of radius $\theta/2$ around the origin.  
Since the homogeneous part is in $K_\theta$ we know that this ball can contain at most one grid point, say $\aa\in \Lambda + \rr$. As before, we can choose $u_x$ so as to maximize $\|g_t u_x \aa\|$, and it follows that
\[
\Delta(g_t u_x (\Lambda + \rr)) \geq \min(\tfrac 12\alpha\beta\theta,2\Delta(\Lambda + \rr))
\]
and so as before, we get that the smallest grid vector in the game state is increasing in size until it reaches $\theta' := \tfrac 12\alpha\beta\theta$. It then satisfies $\Delta \geq \theta'$ until the game state is reliably within $\pi^{-1}(K_\zeta)$, at which point we can use the same line of reasoning to conclude that the smallest grid vector in the game state is increasing in size until it reaches $\zeta' := \tfrac 12\alpha\beta\zeta$. It follows that the game state lies in $F_{\theta'}$ until it eventually lies in and remains in $F_{\zeta'}$.
\end{proof}

\subsubsection{The Auxiliary Strategy}\label{auxiliary}
Let \textbf{Alice} play the default strategy until the game state $\Lambda + \rr$ is in $F_{\zeta'} \cap \pi^{-1}(K_\zeta)$. Assume this is the $n$-th stage of the game. This means \textbf{Bob} has given \textbf{Alice} the parameter $s_n$, \textbf{Alice} needs to choose a parameter $z_n$, and the game step will be $g_{s_n}u_{z_n}\Lambda_{B_n}$.  For the sake of clarity,  in order to analyse the $n$-th move of \textbf{Alice}, we  let $x:=z_n$ and $t:=s_n$.
The goal is to find a successful parameter $x(t)$ that provides \textbf{Alice} a winning strategy.  Note that with Section~\ref{sec:3.3} in mind,  every $\aa \in \Lambda$ will be transformed under the following matrix multiplication:
\begin{equation} 
\begin{pmatrix}
    e^\tau & 0\\
    0 & e^{-\tau}
\end{pmatrix}
\begin{pmatrix}
    1 & -x\\
    0 & 1\\ 
\end{pmatrix}\begin{pmatrix}
    a_1\\
    a_2\\
\end{pmatrix}=\begin{pmatrix}
    e^\tau(a_1-xa_2)\\
    e^{-\tau} a_2\\
\end{pmatrix}  \label{trans} \tag{1} \end{equation}
where $0 \leq \tau \leq t$. The same is also true for every translation vector $\rr \in \R^2 \setminus \Lambda$. 
We claim that for successful parameters $x \in \R$ satisfying the following conditions is sufficient.
\begin{enumerate}
   \item[ (\Romannum{1})] At some maximising time $\tau^* \in (0,t)$ the homogeneous part goes into the cusp, i.e. for some $\aa(t) \in \Lambda$, $g_{\tau^*}u_{x(t)}\aa(t)$ tends to $0$ as $t$ tends to $\infty$.
   \item[(\Romannum{2})] At time $\tau=t$, $g_{t}u_{x}\Lambda$ is back in a bounded region i.e. $g_{t}u_{x}\Lambda \in K_\delta$ for some $\delta >0$. 
   \item[(\Romannum{3})] For all time $\tau \in [0,t]$, $g_{\tau}u_{x}(\Lambda+\rr) \in F_\delta$  for some $\delta > 0 $, i.e.\ all representative translation vectors stay bounded away from the origin.
\end{enumerate}
\begin{enumerate}
    \item[(\Romannum{4})] The inequality $\|x\| \leq 1-e^{-t}$ must hold.
\end{enumerate}
The last condition is simply arising from Section~\ref{sec:3.3} in order to satisfy the rules of the game.

After using the auxiliary strategy, \textbf{Alice} uses the default strategy with initial game state $g_t u_x (\Lambda + \rr)$. Conditions (\Romannum{2}) and (\Romannum{3}) together with Lemmas \ref{lemmaKeps} and \ref{lemmaFeps} guarantee that while \textbf{Alice} plays the default strategy, the game state will remain in $\pi^{-1}(K_\theta)\cap F_\theta$ for some $\theta > 0$ depending only on $\delta$. Finally, when the game state goes back into $\pi^{-1}(K_\zeta)\cap F_{\zeta'}$, \textbf{Alice} continues playing the default strategy until \textbf{Bob} plays $\alpha_n \leq 1/k$, where $k$ is the number of times this cycle has happened so far. We know that \textbf{Bob} must eventually play such an $\alpha_n$ since otherwise, he loses by default. When he does, \textbf{Alice} switches to playing the auxiliary strategy and the cycle continues.

Condition (\Romannum{1}) ensures the homogeneous part escapes every bounded region so by Dani's correspondence, the outcome of the game avoids the set $\BA$. Similarly, Condition (\Romannum{3}) together with the fact that in the default strategy the game state remains in $\pi^{-1}(K_\theta)\cap F_{\theta'}$ ensures that the outcome lies in the set $\BA^{\gamma}$, by letting $\epsilon = \min(\theta,\theta',\delta)$ in Theorem \ref{theoremdani}. Thus, if we can show that the auxiliary strategy has the above four properties, then this will show that $\BA^\gamma \setminus \BA$ is rapid winning, thus proving Theorem \ref{maintheoremv2} and hence also Theorem \ref{maintheorem}.

\subsubsection*{Satisfying Condition (\Romannum{1})}
Consider an arbitrary lattice point $\aa \in \Lambda$ and let $x = \frac{a_1}{a_2}+e^{-t}$. Then the transformation will be
\begin{equation*} 
\begin{pmatrix}
    e^\tau & 0\\
    0 & e^{-\tau}
\end{pmatrix}
\begin{pmatrix}
    1 & -\frac{a_1}{a_2}+e^{-t}\\
    0 & 1\\ 
\end{pmatrix}\begin{pmatrix}
    a_1\\
    a_2\\
\end{pmatrix}=\begin{pmatrix}
    e^{\tau-t}a_2\\
    e^{-\tau}a_2\\
\end{pmatrix}. \end{equation*}

Therefore for vector $\aa$ the initial norm is $|a_2|$ which is bounded. Then the maximising point happens exactly at $\tau=t/2$ where $\aa$ has transformed to $e^{-t/2} a_2(1,1)$. This tends to zero as $t$ tends to infinity as required. Finally at time $t$, the norm is back to $|a_2|$ which is necessary but not sufficient for condition (\Romannum{2}).

\begin{remark}
    Note that \textbf{Alice} has yet to choose the particular lattice point $\aa$ which then defines her $u_x$ transformation. Our analysis so far has been for any generic $\aa$.
\end{remark}
\subsubsection*{Satisfying Condition (\Romannum{2})}
Note that while addressing condition (\Romannum{1}) we ensured that the particular lattice point $\aa$ returns to the bounded regions at time $t$. Now we need to prove in fact \textit{every} lattice point $\bb$ stays bounded. This however simply holds in this particular case where we are working with two-dimensional lattices. Let $\aa'$ and $\bb'$ be shorthands for $u_x \aa$ and $u_x \bb$ respectively. Consider the parallelogram spanned by $\aa'$ and $\bb'$. Its area is fixed under the $g_\tau$ transformation and $\aa$ is also shown to stay bounded under the transformation. Therefore the only other spanning vector $\bb$ also has to stay bounded. More precisely:
\[ \|g_t \aa'\|\cdot \|g_t \bb' \| \geq |g_t \aa' \wedge g_t \bb' | = |\aa'\wedge\bb' | \gtrsim_\times 1.\]
Now since $\|g_t \aa'\| = a_2 \asymp_\times 1$, we get $\|g_t \bb' \| \gtrsim_\times 1$.\\
Another case to be considered is when $\aa'$ and $\bb'$ do not span a parallelogram, i.e.\ $\bb'$ lies on the line $\R\aa'$ so $\aa'\wedge\bb' = 0$. This implies $\bb' = n\aa'$ for some $n\in\Z\setminus \{0\}$, and thus $\|g_t \bb' \| \geq \|g_t \aa'\| \asymp_\times 1$.
\subsubsection*{Satisfying Condition (\Romannum{4})}
Here, we address the remark at the end of Condition (\Romannum{1}), Since lattices are symmetric around the origin, let \textbf{Alice} choose from the first quadrant. This means we need to show;
\[
\left| \frac{a_1}{a_2}-e^{-t}\right| \leq 1 - e^{-t}\]
where $e^{-t} \leq \frac{1}{2}$. One can easily see that this holds when $\frac{a_1}{a_2} \leq 1$.
We also claim that $\aa$ can be chosen such that \[\zeta \lesssim \| \aa \| \lesssim \frac{1}{\zeta}.\]
The first inequality is trivial due to the definition of $K_\zeta$. 
Let $\mu$ be the inhomogeneous minimum of the lattice. Then $\mu \leq 4 \zeta^{-1}$. If we draw a circle with radius $\mu$, it will necessarily include a lattice point. It now suffices to draw this circle in the upper half of the first quadrant. For instance, if we draw it tangent to the $y$-axis and the line $\{y=x\}$ we see the furthest we need to go from the origin to hit a lattice point is $\mu(1+\sin(\frac{\pi}{8})^{-1}) \lesssim \frac{1}{\zeta}$.

Thus, there exists a primitive lattice vector $\aa\in\Lambda$ in the upper half of the first quadrant, satisfying $\zeta \leq \|\aa\| \lesssim \zeta^{-1}$. We claim that there is a second primitive lattice vector $\bb\in\Lambda$, such that $\{\aa,\bb\}$ constitutes a spanning set for $\Lambda$, such that $\bb$ is in the upper half of the first quadrant and $\zeta \leq \|\bb\| \lesssim \zeta^{-1}$. Indeed, without loss of generality suppose that $\aa$ lies below the line of angle $3\pi/8$ passing through the origin. Then let $\bb'$ be the intersection of the lines $\{\bb : \aa\wedge\bb = 1\}$ and the $y$-axis, i.e. $\bb' = (0,1/a_1)$. There must be a lattice point $\bb = \bb' + t\aa$ with $0\leq t < 1$. Then
\[
\zeta \leq \|\bb\| \leq \|\bb'\| + \|\aa\| \leq \frac{1}{\sin(\pi/8)\|\aa\|} + \|\aa\| \lesssim \frac{1}{\zeta}\cdot
\]

\subsubsection*{Satisfying Condition (\Romannum{3})}
Let $\aa$ and $\bb$ be two primitive vectors in $\Lambda$ chosen according to Condition (\Romannum{4}). Without loss of generality, let $\rr$ be the grid point inside the parallelogram spanned by $\aa$ and $\bb$ (this is not necessary and only makes the visualisations simpler). Note that $\rr$ is $\zeta$-away from $\aa$ and $\bb$, more generally;
$\dist(\rr , \Lambda) \geq \zeta$.
Now let $A$ be the matrix transformation taking $(0,1)$ to $\aa$ and $(1,0)$ to $\bb$. Let $\pp = A^{-1} \rr$, then by Lemma~\ref{boundedmatrix}, the pre-image in the standard lattice satisfies
\[\dist(\pp , \Z^2) \gtrsim \zeta.\]
This implies one of the coordinates is $\zeta$-away from $\Z$. So without loss of generality, we can assume 
\[\dist(\pp , \Z \times \R) \gtrsim \zeta.\]
Applying $A$ and getting back to $\Lambda$ we get that

\[\dist(\rr,\Lambda + \R\aa) \gtrsim \zeta.\]
\begin{remark}
    The last inequality indeed holds for \textbf{any} grid point $\rr$, not only the particularly chosen representative.
\end{remark} 
 \begin{remark}
      This is assuming \[\dist(\rr,\Lambda + \R\aa) \geq \dist(\rr,\Lambda + \R\bb),\] and one can swap the choice of $\aa$ and $\bb$ if needed.
 \end{remark}
 The last remark completes \textbf{Alice}'s strategy. \textbf{Alice} needs to pick a lattice point $\aa$ such that $\Lambda + \R\aa$ is $\zeta$-away from $\rr$. We claim this fulfils Condition (\Romannum{3}).
Let $\xx' \in \Lambda' + \rr'$ with the prime symbol denoting the result after \textbf{Alice}'s move as before. We need to show that $\|g_s \xx'\| \gtrsim_\times 1$ for $0 \leq s \leq t$. The idea is to use the fact that $\|g_s \aa'\|$ is bounded:

\[\|g_s \aa'\|\cdot \|g_s \xx'\| \geq |g_s \aa' \wedge g_s \xx'| = |\aa'\wedge\xx'| \gtrsim_\times 1.\]
In the last inequality, we are using the fact that since $\rr$ is away from $\Lambda + \R\aa$, $\xx'$ is also away from $\Lambda + \R\aa'.$ Now since $\|g_s \aa'\| = a_2\max(e^{-s},e^{-(t -  s)}) \lesssim_\times 1$, we get $\|g_s \xx'\| \gtrsim_\times 1$.

\newpage

\bibliographystyle{amsplain}

\bibliography{main}
\end{document}